\documentclass{amsart}
\usepackage{newtxtext,newtxmath}
\usepackage{miyamath,tikz-cd}
\DeclareMathOperator{\Hyp}{Hyp}
\DeclareMathOperator{\sH}{\mathscr{H}}
\DeclareMathOperator{\bC}{\mathbb{C}}

\DeclareMathOperator{\hbPV}{\widehat{\mathbb{P}^1_V}}
\DeclareMathOperator{\hbAV}{\widehat{\mathbb{A}^1_V}}
\DeclareMathOperator{\totimes}{\widetilde{\otimes}}
\DeclareMathOperator{\inv}{inv}
\DeclareMathOperator{\rmm}{m}
\DeclareMathOperator{\rqq}{\mathrm{q}}

\newcommand{\Ldagotimes}[1]{{\mathop{\otimes}\displaylimits^{\mathbb{L}}}^{\raisebox{-5pt}{\scriptsize\dag}}_{#1}}
\newcommand{\Ldagboxtimes}{{\mathop{\boxtimes}\displaylimits^{\mathbb{L}}}^{\raisebox{-5pt}{\scriptsize\dag}}}
\newcommand{\pdag}{{}^{\dag}}
\mathchardef\hyphen="2D
\setcounter{section}{-1}

\title[Hypergeometric $\mathscr{D}$-modules under $p$-adic non-Liouvilleness condition]{Generalized hypergeometric arithmetic $\mathscr{D}$-modules under a $p$-adic non-Liouvilleness condition.}
\author{Kazuaki Miyatani}
\address{Department of Mathematics, Graduate School of Science, Hiroshima University. 1-3-1 Kagamiyama, Higashi-Hiroshima, 739-8526, Japan. }
\email{miyatani@hiroshima-u.ac.jp}
\urladdr{https://math.miyatani.org/}

\begin{document}

\begin{abstract}
    We prove that the arithmetic $\mathscr{D}$-modules associated with
    the $p$-adic generalized hypergeometric differential operators,
    under a $p$-adic non-Liouvilleness condition on parameters,
    are described as an iterative multiplicative convolution
    of (hypergeometric arithmetic) $\mathscr{D}$-modules of rank one.
    As a corollary, we prove the overholonomicity of hypergeometric arithmetic $\mathscr{D}$-modules
    under a $p$-adic non-Liouvilleness condition.
\end{abstract}

\maketitle

\section{Introduction.}
N.\ M.\ Katz \cite{Katz90ESDE} introduces the hypergeometric $\mathscr{D}$-modules $\sHyp(\balpha;\bbeta)$ over $\Gm$ with complex parameters
$\balpha=(\alpha_1,\dots,\alpha_m)\in\mathbb{C}^m$ and $\bbeta=(\beta_1,\dots,\beta_n)\in\mathbb{C}^n$,
and uses them to nicely describe an ``inductive'' structure of hypergeometric objects.
To be precise, Katz proves that $\sHyp(\balpha;\bbeta)$ is described as iterative 
multiplicative convolution, denoted by $\ast$, of the hypergeometric $\mathscr{D}$-modules $\sHyp(\alpha_i;\emptyset)$'s and $\sHyp(\emptyset;\beta_j)$'s
(i.e. those with only one parameter):

\begin{theorem*}[{\cite[5.3.1]{Katz90ESDE}}]
    Let $\balpha=(\alpha_1,\dots,\alpha_m)\in\mathbb{C}^m$ and $\bbeta=(\beta_1,\dots,\beta_n)\in\mathbb{C}^n$ be
    complex parameters, and assume that for any $i$ and $j$, $\alpha_i-\beta_j$ is not an integer.

    Then, there exist isomorphisms
    \begin{align*}
    \sHyp(\balpha; \bbeta)&\cong \sHyp(\alpha_1,\dots,\alpha_{m-1};\bbeta)\ast\sHyp(\alpha_m;\emptyset),\\
    \sHyp(\balpha; \bbeta)&\cong \sHyp(\balpha;\beta_1,\dots,\beta_{n-1})\ast\sHyp(\emptyset;\beta_n).\\
    \end{align*}
\end{theorem*}

In the previous article \cite{Miyatani}, the author introduces the $p$-adic hypergeometric differential operators
with $p$-adic parameters $\balpha\in(\mathbb{Z}_p)^m$ and $\bbeta\in(\mathbb{Z}_p)^n$
(under a choice of Dwork's $\pi$)
and also the arithmetic $\mathscr{D}$-modules $\sH_{\pi}(\balpha;\bbeta)$ associated with such differential operators.
The author then proves that, in the case where all components of $\balpha$ and $\bbeta$ lie in $\frac{1}{q-1}\mathbb{Z}$,
then $\sH_{\pi}(\balpha;\bbeta)$ has an analogous description as the theorem above
by using the multiplicative convolution of arithmetic $\mathscr{D}$-module on $\Gm$ \cite[3.2.5]{Miyatani}.
As an application of this theorem,
the author proves that a $p$-adic hypergeometric differential operator defines
an overconvergent $F$-isocrystal on $\bG_{\rmm}$ if $m\neq n$, and on $\bG_{\rmm}\setminus\{1\}$ if $m=n$
\cite[4.1.3]{Miyatani}.

\vspace{5mm}
The goal of this article is to extend this decomposition of
 hypergeometric arithmetic $\mathscr{D}$-modules $\sH_{\pi}(\balpha;\bbeta)$
to more general parameters which are not necessarily rational numbers (thus 
they do not necessarily come from a multiplicative character on the residue field).
In fact, we prove this under a $p$-adic non-Liouvilleness condition on parameters:

\begin{theorem*}[Theorem \ref{thm:maintheorem}]
    Let $\balpha=(\alpha_1,\dots,\alpha_m)\in(\mathbb{Z}_p)^m$ and $\bbeta=(\beta_1,\dots,\beta_n)\in(\mathbb{Z}_p)^n$ be
    parameters in $p$-adic integers.
Assume that, for any $i$ and $j$, 
$\alpha_i-\beta_j$ is not an integer nor a $p$-adic Liouville number.

Then, we have isomorphisms
\begin{align*}
\sH_{\pi}(\balpha; \bbeta)&\cong \sH_{\pi}(\alpha_1,\dots,\alpha_{m-1};\bbeta)\ast\sH_{\pi}(\alpha_m;\emptyset)[-1],\\
\sH_{\pi}(\balpha; \bbeta)&\cong \sH_{\pi}(\balpha;\beta_1,\dots,\beta_{n-1})\ast\sH_{\pi}(\emptyset;\beta_n)[-1].
\end{align*}
\end{theorem*}

Since an algebraic number in $\mathbb{Z}_p$ is not a $p$-adic Liouville number,
the theorem above is, in particular, applicable to any algebraic parameters with no integer differences.

As an application of this main theorem, we prove the quasi-$\Sigma$-unipotency in the sense of Caro \cite{Caro18MM},
in particular the overholonomicity,
of our $\sH_{\pi}(\balpha;\bbeta)$ under a stronger condition of $p$-adic non-Liouvilleness:

\begin{corollary*}[Proposition \ref{prop:overholonomic}]
    Let $\balpha=(\alpha_1,\dots,\alpha_m)\in(\mathbb{Z}_p)^m$ and $\bbeta=(\beta_1,\dots,\beta_n)\in(\mathbb{Z}_p)^n$
    be parameters in $p$-adic integers.
Assume that $(m,n)\neq (0,0)$, that $\alpha_i-\beta_j\not\in\mathbb{Z}$ for any $(i,j)$,
and that the subgroup $\Sigma$ of $\mathbb{Z}_p/\mathbb{Z}$ generated by $\alpha_i$'s and $\beta_j$'s
does not have a $p$-adic Liouville number.
Then, $\sH_{\pi}(\balpha;\bbeta)$ is a quasi-$\Sigma$-unipotent $\sD^{\dag}_{\hbPV,\bQ}$-module.
In particular, it is an overholonomic $\sD^{\dag}_{\hbPV,\bQ}$-module.
\end{corollary*}

Contrary to the results in the previous article,
our $\sH_{\pi}(\balpha;\bbeta)$'s do not necessarily
have a Frobenius structure
(in fact, for example, $\sH_{\pi}(\alpha;\emptyset)$ does not have a Frobenius structure
if $\alpha$ is not rational).
It is thus worth to remark that the corollary above gives examples of overholonomic $\sD^{\dag}$-modules
without assuming the existence of a Frobenius structure.

\vspace{5mm}
We conclude this introduction by explaining the organization of this article.

In Section 1, after a quick review of the theory of cohomological operations on arithmetic $\mathscr{D}$-modules,
we define the multiplicative convolution for arithmetic $\mathscr{D}$-modules
and study the relationship with Fourier transform.

In Section 2, we firstly introduce the hypergeometric arithmetic $\mathscr{D}$-modules.
Then, after recalling the notion of $p$-adic Liouvilleness,
we give a crucial lemma on hypergeometric arithmetic $\mathscr{D}$-modules
under a $p$-adic non-Liouvilleness condition,
which we will need in proving the main theorem.

In Section 3, we establish the main theorem and give an application to the quasi-$\Sigma$-unipotence.

\subsection*{Acknowledgement}
This work is supported by JSPS KAKENHI Grant Number 17K14170.

\subsection*{Conventions and Notations.}

In this article, $V$ denotes a complete discrete valuation ring
of mixed characteristic $(0,p)$
whose residue field $k$ is a finite field with $q$ elements.
The fraction field of $V$ is denoted by $K$.
We denote by $|\cdot|$ the norm on $K$ normalized by $|p|=p^{-1}$.
Throughout this article, we assume that there exists an element $\pi$ of $K$
that satisfies $\pi^{q-1}+(-p)^{(q-1)/(p-1)}=0$,
and fix such a $\pi$.

\section{Arithmetic $\mathscr{D}$-modules}
    \subsection{Cohomological operations on $D^{\rb}_{\coh}\big(\sD^{\dag}_{\sP,\bQ}(\pdag{T})\big)$'s.}
    In this subsection, we recall some notation and 
    fundamental properties concerning cohomological operations on $D^{\rb}_{\coh}\big(\sD^{\dag}_{\sP,\bQ}(\pdag{T})\big)$.

    \begin{definition}
        (i) A \emph{d-couple} is a pair $(\sP,T)$, where $\sP$ is a smooth formal scheme over $\Spf(V)$ and where $T$ is a divisor of the special fiber of $\sP$ (an empty set is also a divisor).
        If a $k$-variety $X$ is the special fiber of $(\sP, T)$, we say that
        $(\sP, T)$ \emph{realizes} $X$.

        (ii) A \emph{morphism of d-couples} $f\colon(\sP',T')\to(\sP,T)$ is
        a morphism $\overline{f}\colon\sP'\to\sP$ such that
        $\overline{f}(\sP'\setminus T')\subset\sP\setminus T$ and that
        $\overline{f}^{-1}(T)$ is a divisor (or empty).
        We say that $f$ \emph{realizes} the morphism $f_0\colon X'\to X$ of $k$-varieties
        if $(\sP', T')$ \resp{$(\sP, T)$} realizes $X'$ \resp{$X$} and
        if $f$ induces $f_0$.
    \end{definition}

    \begin{remark}
        In the previous article \cite{Miyatani}, we usually denote a morphism of d-couples by putting a tilde,
        like $\widetilde{f}$, and we use the notation $f$ for the morphism of $k$-varieties realized by $\widetilde{f}$.
        In this article, we do not put tildes on the name of a morphism of d-couples
        because we rarely need to write the name of the realized morphism of $k$-varieties.
    \end{remark}

    \begin{para}
    For each d-couple $(\sP, T)$, we denote by $\sO_{\sP,\bQ}(\pdag{T})$ the sheaf of
    functions on $\sP$ with overconvergent singularities along $T$ \cite[4.2.4]{Berthelot96},
    and denote by $\sD^{\dag}_{\sP,\bQ}(\pdag{T})$ the sheaf of
    differential operators on $\sP$ with overconvergent singularities along $T$ \cite[4.2.5]{Berthelot96}.
    \end{para}

    \begin{para}
        \emph{Extraordinary pull-back functors \cite[1.1.6]{Caro06CM}.}
        Let $f\colon (\sP',T')\to(\sP,T)$ be a morphism of d-couples.
        Then, we have the extraordinary pull-back functor
        \[
            f^!\colon D^{\rb}_{\coh}\big(\sD^{\dag}_{\sP,\bQ}(\pdag{T})\big)\longrightarrow D^{\rb}\big(\sD^{\dag}_{\sP',\bQ}(\pdag{T'})\big).
        \]

        If $\overline{f}$ is smooth, or if $\overline{f}$ induces an open immersion
        $\overline{f}^{-1}(\sP\setminus T)\hookrightarrow \sP'\setminus T'$,
        then the essential image of $f^!$ lies in $D^{\rb}_{\coh}\big(\sD^{\dag}_{\sP',\bQ}(\pdag{T'})\big)$.
        In the case where $\overline{f}$ is an isomorphism, we also denote $f^!$ by $f^{\ast}$.
        In this case, we have $f^{\ast}(\sM)=\sD^{\dag}_{\sP,\bQ}(\pdag{T'})\otimes_{\sD^{\dag}_{\sP',\bQ}(\pdag{f^{-1}(T)})}\overline{f}^{-1}(\sM)$.

        Let $f'\colon (\sP'',T'')\to (\sP',T')$ be another morphism of d-couples
        and let $\sM$ be an object of $D^{\rb}_{\coh}\big(\sD^{\dag}_{\sP,\bQ}(\pdag{T})\big)$.
        Then, as long as $f^!\sM$ belongs to $D^{\rb}_{\coh}\big(\sD^{\dag}_{\sP',\bQ}(\pdag{T'})\big)$,
        we have a natural isomorphism $f'^!f^!(\sM)=(f\circ f')^!(\sM)$ of functors.
    \end{para}

    \begin{para}
    \emph{Ordinary push-forward functors \cite[1.1.6]{Caro06CM}.}
    Let $f\colon (\sP',T')\to(\sP,T)$ be a morphism of d-couples.
    Then, we have a push-forward functor
    \[
        f_{+}\colon D^{\rb}_{\coh}\big(\sD^{\dag}_{\sP',\bQ}(\pdag{T'})\big)\to D^{\rb}\big(\sD^{\dag}_{\sP,\bQ}(\pdag{T})\big).
    \]

    If $\overline{f}$ is proper and if $T'=\overline{f}^{-1}(T)$, then the essential image
    of $f_{+}$ lies in $D^{\rb}_{\coh}\big(\sD^{\dag}_{\sP,\bQ}(\pdag{T})\big)$.

    Let $f'\colon (\sP'',T'')\to (\sP',T')$ be another morphism of d-couples,
    and let $\sM''$ be an object of $D^{\rb}_{\coh}\big(\sD^{\dag}_{\sP'',\bQ}(\pdag{T''})\big)$.
    Then, as long as $f'_{+}\sM''$ is an object of $D^{\rb}_{\coh}\big(\sD^{\dag}_{\sP',\bQ}(\pdag{T'})\big)$,
    we have a natural isomorphism $f_+f'_{+}\sM''\cong(f\circ f')_{+}\sM''$.
    
    If $\sP'=\sP$ and if $\overline{f}$ is the identity morphism on $\sP$
    (thus $f$ represents an open immersion),
    then $f_{+}$ is obtained by considering the complex of $\sD^{\dag}_{\sP,\bQ}(\pdag{T'})$-modules
    as a complex of $\sD^{\dag}_{\sP,\bQ}(\pdag{T})$-module
    via the inclusion $\sD^{\dag}_{\sP,\bQ}(\pdag{T})\hookrightarrow\sD^{\dag}_{\sP,\bQ}(\pdag{T'})$.

    The base change is also available.
    Suppose that we are given a cartesian diagram of d-couples
    \[
    \begin{tikzcd}
        {(\mathscr{Q}', D')} \arrow[r, "g'"] \arrow[d, "f’"'] & {(\mathscr{P}', T')} \arrow[d, "f"] \\
        {(\mathscr{Q}, D)} \arrow[r, "g"'] & {(\mathscr{P}, T),}
    \end{tikzcd}
    \]
    and let $\sM$ be an object of $D^{\rb}_{\coh}\big(\sD^{\dag}_{\mathscr{Q},\bQ}(\pdag{D})\big)$.
    If $f'^!\sM$ belongs to $D^{\rb}_{\coh}\big(\sD^{\dag}_{\mathscr{Q}',\bQ}(\pdag{D'})\big)$
    and if $g_+\sM$ belongs to $D^{\rb}_{\coh}\big(\sD^{\dag}_{\mathscr{P},\bQ}(\pdag{T})\big)$,
    then we have a natural isomorphism $g'_+f'^!\sM\cong f^!g_+\sM$ by \cite[Remark in 5.7]{Abe14}.
    \end{para}

    \begin{para}
    \emph{Interior tensor functor.}
    Let $(\sP, T)$ be a d-couple.
    Then, we have an overconvergent tensor functor \cite[2.1.3]{Caro2015}
    \[
        \Ldagotimes{\sO_{\sP,\bQ}(\pdag{T})}\colon D^{\rb}_{\coh}\big(\sD^{\dag}_{\sP,\bQ}(\pdag{T})\big)\times D^{\rb}_{\coh}\big(\sD^{\dag}_{\sP,\bQ}(\pdag{T})\big)
        \to D^{\rb}\big(\sD^{\dag}_{\sP,\bQ}(\pdag{T})\big).
    \]
    We define an interior tensor functor
    \[
        \widetilde{\otimes}_{(\sP,T)}\colon D^{\rb}_{\coh}\big(\sD^{\dag}_{\sP,\bQ}(\pdag{T})\big)\times D^{\rb}_{\coh}\big(\sD^{\dag}_{\sP,\bQ}(\pdag{T})\big)
        \to D^{\rb}\big(\sD^{\dag}_{\sP,\bQ}(\pdag{T})\big)
    \]
    by $\sM\widetilde{\otimes}_{(\sP,T)}\sN\defeq\sM\displaystyle\Ldagotimes{\sO_{\sP,\bQ}(\pdag{T})}\sN[-\dim\sP]$.
    If no confusion would occur, we omit the subscript $(\sP,T)$.

    Let $f\colon(\sP',T')\to(\sP,T)$ be a morphism of d-couples, 
    and let $\sM$ and $\sN$ be objects of $D^{\rb}_{\coh}\big(\sD^{\dag}_{\sP,\bQ}(\pdag{T})\big)$.
    Assume that $\sM\totimes_{(\sP,T)}\sN$ belongs to $D^{\rb}_{\coh}\big(\sD^{\dag}_{\sP,\bQ}(\pdag{T})\big)$
    and that $f^!\sM$ and $f^!\sN$ belong to $D^{\rb}_{\coh}\big(\sD^{\dag}_{\sP',\bQ}(\pdag{T'}\big)\big)$.
    Then, we have an isomorphism 
    $f^!(\sM\widetilde{\otimes}_{(\sP,T)}\sN)\cong (f^!\sM)\widetilde{\otimes}_{(\sP',T')}(f^!\sN)$.
    by \cite[(2.1.9.1)]{Caro2015}.

    The projection formula is also available.
    Namely, let $f\colon(\sP',T')\to(\sP,T)$ be a morphism of d-couples,
    let $\sM$ be an object of $D^{\rb}_{\coh}\big(\sD^{\dag}_{\sP',\bQ}(\pdag{T'})\big)$,
    and let $\sN$ be an object of $D^{\rb}_{\coh}\big(\sD^{\dag}_{\sP,\bQ}(\pdag{T})\big)$.
    Assume that $f^!\sN$, $\sM\totimes_{(\sP',T')}f^!\sN$ and $f_+\sM$ are all coherent objects.
    Then, we have an isomorphism $f_+(\sM\totimes_{(\sP',T')}f^!\sN)\cong (f_+\sM)\totimes_{(\sP,T)}\sN$
    by \cite[2.1.6]{Caro2015}.
    \end{para}

    \begin{para}
    \emph{Exterior tensor functors \cite[2.3.3]{Caro2015}.}
    At last, we discuss the exterior tensor functor.
    Let $(\sP_1, T_1)$ and $(\sP_2, T_2)$ be two d-couples,
    and let $(\sP, T)\defeq(\sP_1,T_1)\times(\sP_2,T_2)$ be the product of them,
    that is, $\sP\defeq\sP_1\times\sP_2$ and $T\defeq(T_1\times P_2)\cup(P_1\times T_2)$.
    Then, we have an exterior tensor functor
    \[
        \Ldagboxtimes\colon D^{\rb}_{\coh}\big(\sD^{\dag}_{\sP_1,\bQ}(\pdag{T_1})\big)\times D^{\rb}_{\coh}\big(\sD^{\dag}_{\sP_2,\bQ}(\pdag{T_2})\big)
        \to D^{\rb}_{\coh}\big(\sD^{\dag}_{\sP,\bQ}(\pdag{T})\big).
    \]
    
    As usual, this functor can be described as follows.
    In the situation above, let $\pr_i\colon (\sP, T)\to(\sP_i, T_i)$ be projections for $i=1,2$.
    Then, we have an isomorphism \cite[(2.3.5.2)]{Caro2015}
    \[
      \sE\Ldagboxtimes\sF\cong \pr_1^!\sE\totimes_{(\sP,T)}\pr_2^!\sF.
    \]

    The K\"unneth formula is also available for this exterior tensor functor \cite[(2.3.7.2)]{Caro2015}.
    \end{para}

\subsection{Fourier transform.}
  In this subsection, we recall the notion of Fourier transform for arithmetic $\mathscr{D}$-modules \cite{Huyghe04}.

  \begin{para}
  Recall from Conventions and Notations that, 
  in this article, we fix an element $\pi$ in $K$ that satisfies $\pi^{q-1}+(-p)^{(q-1)/(p-1)}=0$.
  Let $\sL_{\pi}$ denote the Dwork isocrystal associated with $\pi$.
  \end{para}

  \begin{para}
    Let us introduce notations which we need to define the Fourier transform.
\[
    p_1, p_2\colon (\sP,T)\defeq \big(\widehat{\bP^1_V}, \{\infty\}\big) \times\big(\widehat{\bP^1_V}, \{\infty\}\big)
    \rightrightarrows \big(\widehat{\bP^1_V}, \{\infty\}\big)
\]
be the first and the second projection, respectively.
There exists a smooth formal scheme $\widetilde{\sP}$ and a projective morphism $\overline{f}\colon\widetilde{\sP}\to\sP=\hbPV\times\hbPV$
such that $\overline{f}$ induces an isomorphism $\overline{f}^{-1}\left(\hbAV\times\hbAV\right)\cong\hbAV\times\hbAV$
and that this isomorphism followed by the multiplication map $\hbAV\times\hbAV\to\hbAV$
extends to a morphism $\overline{\lambda}\colon\widetilde{\sP}\to\hbPV$.
Then, $\overline{f}$ \resp{$\overline{\lambda}$} defines the morphism of d-couples
$f\colon \big(\widetilde{\sP}, \overline{f}^{-1}(T)\big)\to (\sP, T)$
\resp{$\lambda\colon \big(\widetilde{\sP}, \overline{f}^{-1}(T)\big)\to\big(\hbPV, \{\infty\}\big)$}.
Finally, we put $\sN_{\pi}\defeq f_+\lambda^!(\sL_{\pi}[-1])$.
Because $\sL_{\pi}$ is an overconvergent isocrystal, $\sN_{\pi}$ is an object of $D^{\rb}_{\coh}\big(\sD^{\dag}_{\widehat{\bP^1_V},\bQ}(\pdag{\{\infty\}})\big)$.
\label{para:notationfourier}
    \end{para}

\begin{definition}
  The functor
  \[
    \FT_{\pi}\colon D^{\rb}_{\coh}\big(\sD^{\dag}_{\widehat{\bP^1_V},\bQ}(\pdag{\{\infty\}})\big)
    \longrightarrow D^{\rb}_{\coh}\big(\sD^{\dag}_{\widehat{\bP^1_V},\bQ}(\pdag{\{\infty\}})\big)
  \]
  is defined by sending $\sM$ in $D^{\rb}_{\coh}\big(\sD^{\dag}_{\widehat{\bP^1_V},\bQ}(\pdag{\{\infty\}})\big)$ to
  \[
    \FT_{\pi}(\sM) = p_{2,+}\big(p_1^!\sM\totimes_{(\sP,T)}\sN_{\pi}\big).
  \]
  This object $\FT_\pi(\sM)$ is called the \emph{geometric Fourier transform} of $\sM$.
\end{definition}

\begin{remark}
    It is a central result of \cite{Huyghe04} that $\FT_{\pi}$ sends $D^{\rb}_{\coh}\big(\sD^{\dag}_{\hbPV,\bQ}(\pdag{\{\infty\}})\big)$.

    The argument in loc.\ cit.\ also shows that, if $\sM$ belongs to $D^{\rb}_{\coh}\big(\sD^{\dag}_{\hbPV,\bQ}(\pdag{\{\infty\}})\big)$,
    then $p_1^!\sM\totimes_{(\sP,T)}\sN_{\pi}$ is also an object of $D^{\rb}_{\coh}\big(\sD^{\dag}_{\sP,\bQ}(\pdag{T})\big)$. 
    In fact, we may assume that $\sM$ is a (single) coherent $\sD^{\dag}_{\hbPV,\bQ}(\pdag{\{\infty\}})$-module placed at degree zero,
    and since such a coherent module has a free resolution \cite[5.3.3, (ii)]{Huyghe98},
    we may assume that $\sM=\sD^{\dag}_{\hbPV,\bQ}(\pdag{\{\infty\}})$.
    The claim follows from the calculation in \cite[4.2.2]{Huyghe04}.
    \label{rem:cohoffourier}
\end{remark}

\begin{para}
The geometric Fourier transform has another important description after passing to the global sections.
Let $A_1(K)^{\dag}$ be the ring defined by
\[
    A_1(K)^{\dag}\defeq\bigg\{\sum_{l,k\in\mathbb{N}}a_{l,k}x^l\partial^{[k]} \,\bigg|\, a_{l,k}\in K, \exists C>0, \exists \eta<1, |a_{l,k}|_p<C\eta^{l+k}\bigg\}.
\]
Then, by the $\mathscr{D}^{\dag}$-affinity \cite[5.3.3]{Huyghe98},
the functor $\Gamma\big(\widehat{\bP^1_V}, -\big)$ on the category of coherent $\mathscr{D}^{\dag}_{\widehat{\bP^1_V},\bQ}(\pdag{\{\infty\}})$-modules
is exact and gives an equivalence of this category with the category of coherent $A_1(K)^{\dag}$-modules (cf. \cite[p.915]{Huyghe98}).
Under this identification, the geometric Fourier transform is described as follows.
\end{para}

\begin{proposition}[{\cite[5.3.1]{Huyghe04}}]
Let $\varphi_{\pi}\colon A_1(K)^{\dag}\to A_1(K)^{\dag}$ be the ring automorphism defined by
$\varphi_{\pi}(x)=-\partial/\pi$ and $\varphi_{\pi}(\partial)=\pi x$.
Let $\sM$ be a coherent $A_1(K)^{\dag}$-module and denote by $\varphi_{\pi,\ast}\sM$
the coherent $A_1(K)^{\dag}$-module obtained by letting $A_1(K)^{\dag}$ act on $\sM$
 via $\varphi_{\pi}$.
Then, we have a natural isomorphism $\FT_{\pi}(\sM)\cong\varphi_{\pi,\ast}\sM[-1]$.
\label{prop:arithfourier}
\end{proposition}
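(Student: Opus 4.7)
The plan is to translate everything to the level of global sections, where both sides become concrete $A_1(K)^\dag$-modules, and then follow the standard Fourier kernel calculation, keeping track of the various shifts.

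Step 1 (Reduction to $A_1(K)^\dag$-modules and to a single generator). By the $\sD^\dag$-affinity of $(\hbPV,\{\infty\})$ recalled just before the statement, the functor $\Gamma(\hbPV,-)$ is exact and gives an equivalence with coherent $A_1(K)^\dag$-modules. All the operations appearing in the definition of $\FT_{\pi}$ land in $D^{\rb}_{\coh}$ by Remark \ref{rem:cohoffourier}, so I would work entirely with (complexes of) $A_1(K)^\dag$- and $\bigl(A_1(K)^\dag\widehat{\otimes}A_1(K)^\dag\bigr)$-modules. Following the strategy indicated in Remark \ref{rem:cohoffourier}, a free resolution reduces the claim to the single case $\sM=\sD^\dag_{\hbPV,\bQ}(\pdag\{\infty\})$, i.e.\ $\sM=A_1(K)^\dag$; since both $\FT_{\pi}$ and $\varphi_{\pi,\ast}(-)[-1]$ are triangulated functors in $\sM$, naturality is then automatic.

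Step 2 (Identifying the kernel $\sN_{\pi}$). The morphism $\overline{f}$ is an isomorphism over $\hbAV\times\hbAV$, and on this open $\lambda$ is the multiplication map $(x,y)\mapsto xy$. Hence over the affine part the complex $\sN_{\pi}=f_{+}\lambda^{!}(\sL_{\pi}[-1])$ is just the shifted pullback $\lambda^{\ast}\sL_{\pi}[-1]$; writing $e$ for the standard generator of the Dwork isocrystal (so $\partial e=\pi e$), the pullback rules give
\[
\partial_{x}(\lambda^{\ast}e)=\pi y\,\lambda^{\ast}e,\qquad \partial_{y}(\lambda^{\ast}e)=\pi x\,\lambda^{\ast}e,
\]
i.e.\ the familiar Dwork kernel for $e^{\pi xy}$.

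Step 3 (Unfolding the definition). Combine the formulas for the cohomological operations recalled in Section 1 with the calculation of Step 2. The pull-back $p_{1}^{!}\sM$ corresponds to $\sM$ pulled back along the first projection with trivial $\partial_{y}$-action. The interior tensor $p_{1}^{!}\sM\,\totimes_{(\sP,T)}\sN_{\pi}$ then has the same underlying module as the pullback of $\sM$, but with the connection on the product twisted by Leibniz so that $\partial_{x}$ acts as $\partial_{x}^{\sM}+\pi y$ and $\partial_{y}$ acts as $\pi x$. Finally $p_{2,+}$ of such a module is (by the standard description of the push-forward along a projection in terms of the relative de Rham complex) the two-term complex $\bigl[\;\cdot\;\xrightarrow{\partial_{x}^{\sM}+\pi y}\;\cdot\;\bigr]$, with $y$- and $\partial_{y}$-action inherited from the product.

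Step 4 (Matching with $\varphi_{\pi,\ast}\sM[-1]$). For $\sM=A_1(K)^\dag$ the relative de Rham complex in Step 3 is acyclic outside a single degree because $\partial_{x}^{\sM}+\pi y$ admits an explicit inverse on overconvergent series in $y$ (this is where the overconvergence of $\sL_{\pi}$ enters, and it is what pins down the single surviving cohomological degree giving the shift $[-1]$). The surviving module is identified with $\sM$ via the substitution that sends the class of $m$ to itself; under this identification the operator ``new $y$'' equals $-\partial_{x}^{\sM}/\pi$ (read off from the kernel relation $\partial_{x}^{\sM}+\pi y\equiv 0$) and ``new $\partial_{y}$'' equals $\pi x^{\sM}$. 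These are exactly the images under $\varphi_{\pi}$ of $x$ and $\partial$, which is precisely the definition of $\varphi_{\pi,\ast}\sM$. The single shift $[-1]$ in the statement is the net result of the $[-1]$ built into the definition of $\sN_{\pi}$, the $[-\dim\sP]=[-2]$ in $\totimes$, the $[+1]$ from $p_{1}^{!}=p_{1}^{\ast}[1]$, and the $[+1]$ from the relative de Rham complex along the one-dimensional fibre of $p_{2}$.

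The main obstacle will be Step 4: namely, justifying that the cokernel computation with $\partial_{x}^{\sM}+\pi y$ actually works inside the overconvergent Weyl algebra (as opposed to its completion without growth conditions), and doing so in a way compatible with the ring structure so that the $y$- and $\partial_{y}$-actions on the cokernel assemble into the $\varphi_{\pi}$-twisted $A_1(K)^\dag$-action rather than into something merely isomorphic to it as a $K$-vector space. Once this overconvergence bookkeeping is settled, the rest is routine manipulation with the Weyl algebra.
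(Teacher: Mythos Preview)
The paper does not give its own proof of this proposition; it is quoted directly from \cite[5.3.1]{Huyghe04} with no argument supplied. Your outline---reducing via $\sD^{\dag}$-affinity and a free resolution to the case $\sM=A_1(K)^{\dag}$, identifying $\sN_{\pi}$ over the affine chart with the Dwork kernel ``$e^{\pi xy}$'', computing $p_{2,+}$ as a one-variable relative de Rham complex, and reading off the twisted $A_1(K)^{\dag}$-action on its unique nonzero cohomology---is precisely the strategy carried out in Huyghe's paper, and is correct in shape. The obstacle you isolate in Step~4 (showing that $\partial_x+\pi y$ is bijective at the overconvergent level, not merely on formal power series) is indeed where the substantive work lies, and it is exactly the computation in \cite[\S4.2]{Huyghe04}; once that is established, the identification of the induced $y$- and $\partial_y$-actions with $\varphi_{\pi}$ is a formal Weyl-algebra manipulation, as you say.
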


\subsection{Multiplicative Convolutions}

In this subsection, we define the notion of multiplicative convolution
and study how it is related with Fourier transform.

\begin{para}
We follow the notation in the previous subsection.
We put $(\sP,T')\defeq\big(\hbPV, \{0,\infty\}\big)\times\big(\hbPV,\{0,\infty\}\big)$,
namely, $\sP=\hbPV\times\hbPV$ (which is compatible with the notation in \ref{para:notationfourier})
and $T'\defeq\big(\{0,\infty\}\times\bP^1_k\big)\cup\big(\bP^1_k\times\{0,\infty\}\big)$.
Let $\pr_1, \pr_2\colon (\sP,T') \rightrightarrows \big(\widehat{\bP^1_V}, \{0, \infty\}\big)$
denote the first and the second projection, respectively.
We denote by $f'\colon\big(\widetilde{\sP}, \overline{f}^{-1}(T')\big)\to(\sP, T')$
\resp{$\lambda'\colon\big(\widetilde{\sP}, \overline{f}^{-1}(T')\big)\to\big(\hbPV, \{0,\infty\}\big)$}
the morphism of d-couples defined by $\overline{f}$ \resp{$\overline{\lambda}$}.
\end{para}

\begin{definition}
    We define a \emph{multiplicative convolution} functor
    \[
        \ast\colon D^{\rb}_{\coh}\big(\sD^{\dag}_{\hbPV,\bQ}(\pdag{\{0,\infty\}})\big)\times D^{\rb}_{\coh}\big(\sD^{\dag}_{\hbPV,\bQ}(\pdag{\{0,\infty\}})\big)
        \longrightarrow
        D^{\rb}\big(\sD^{\dag}_{\hbPV,\bQ}(\pdag{\{0,\infty\}})\big)
    \]
    by $\displaystyle\sE\ast\sF\defeq \lambda'_{+}f'^!\big(\sE\Ldagboxtimes\sF\big)=\lambda'_+f'^!\big(\pr_1^!\sE\totimes_{(\sP,T')}\pr_2^!\sF\big)$.
\end{definition}

  In the following, we let $\inv\colon\big(\hbPV, \{0,\infty\}\big)\to\big(\hbPV,\{0,\infty\}\big)$ denote
  the morphism of d-couples defined by $\overline{\inv}\colon\hbPV\to\hbPV; x\mapsto x^{-1}$.

\begin{lemma}
  Let $\sE$ be an object of $D^{\rb}_{\coh}\big(\sD^{\dag}_{\hbPV,\bQ}(\pdag{\{0,\infty\}})\big)$
  and let $\sF$ be an overconvergent isocrystal on $\mathbb{G}_{\rmm,k}$
  considered as an object of $D^{\rb}_{\coh}\big(\sD^{\dag}_{\hbPV,\bQ}(\pdag{\{0,\infty\}})\big)$.
  Then, we have a natural isomorphism
  \[
  \sE\ast\sF\cong \pr_{2,+}\left(\pr_1^!\inv^\ast\sE\totimes_{(\sP,T')} f'_{+}\lambda'^!\mathscr{F}\right)
  \]
  in $D^{\rb}\big(\sD^{\dag}_{\hbPV,\bQ}(\pdag{\{0,\infty\}})\big)$.
  \label{lem:symmetricgeneralization}
\end{lemma}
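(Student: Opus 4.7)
The geometric input is the involution
\[
\tau\colon\mathbb{G}_{m,k}\times\mathbb{G}_{m,k}\xrightarrow{\sim}\mathbb{G}_{m,k}\times\mathbb{G}_{m,k},\qquad (x,y)\mapsto (x^{-1},xy),
\]
which satisfies $\pr_1\circ\tau=\overline{\inv}\circ\pr_1$, $\pr_2\circ\tau=\mu$, and (since $\tau^2=\mathrm{id}$) $\mu\circ\tau=\pr_2$, where $\mu$ denotes the multiplication. Thus $\tau$ interchanges the roles of multiplication and of the second projection, inserting an inversion on the first factor. The plan is to lift $\tau$ to an involutive automorphism $\widetilde{\tau}$ of $\widetilde{\sP}$ (enlarging $\widetilde{\sP}$ to a common resolution if necessary, since the functor $\ast$ is canonically independent of the specific blowup chosen), and then to push the definition of $\sE\ast\sF$ through $\widetilde{\tau}$.

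Writing $\pi_i\defeq\pr_i\circ f'\colon\widetilde{\sP}\to\hbPV$, I first unpack the definition of $\ast$ and apply compatibility of $f'^!$ with $\totimes$ to obtain $\sE\ast\sF\cong\lambda'_+\big(\pi_1^!\sE\totimes\pi_2^!\sF\big)$. Using $\lambda'=\pi_2\circ\widetilde{\tau}$, I rewrite $\lambda'_+=\pi_{2,+}\circ\widetilde{\tau}_+$; since $\widetilde{\tau}$ is an involutive isomorphism of d-couples, $\widetilde{\tau}_+=\widetilde{\tau}^!$. Commuting $\widetilde{\tau}^!$ through the tensor and substituting $\pi_1\circ\widetilde{\tau}=\inv\circ\pi_1$ and $\pi_2\circ\widetilde{\tau}=\lambda'$, together with $\inv^!=\inv^*$, yields the intermediate identity
\[
\sE\ast\sF\cong \pi_{2,+}\big(\pi_1^!\inv^*\sE\totimes\lambda'^!\sF\big).
\]

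For the last step I descend back to the base d-couple via $f'$. The factorizations $\pi_{2,+}=\pr_{2,+}\circ f'_+$ and $\pi_1^!=f'^!\circ\pr_1^!$, together with the projection formula $f'_+(f'^!A\totimes B)\cong A\totimes f'_+B$ applied with $A=\pr_1^!\inv^*\sE$ and $B=\lambda'^!\sF$, produce the right-hand side $\pr_{2,+}\big(\pr_1^!\inv^*\sE\totimes f'_+\lambda'^!\sF\big)$. The coherence hypotheses required along the way---for the projection formula, for $\widetilde{\tau}^!$ to commute with $\totimes$, and for $f'^!$ to commute with $\totimes$---should all follow from the coherence of $\sE$ combined with the coherence of $\lambda'^!\sF$ and $f'_+\lambda'^!\sF$, which holds because $\sF$ is an overconvergent isocrystal.

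The principal technical hurdle will be the construction of $\widetilde{\tau}$ itself. As a birational involution of $\sP=\hbPV\times\hbPV$, the map $\tau$ has indeterminacies exactly at $(0,\infty)$ and $(\infty,0)$, which are precisely the indeterminacies of $\mu$. A local computation on the blowup of $\sP$ at these two points should show that $\tau$ extends to an automorphism swapping the two exceptional fibers. If a direct lift to the particular $\widetilde{\sP}$ used to define $\ast$ turns out to be awkward, I would instead dominate $\widetilde{\sP}$ by a common $\widetilde{\tau}$-equivariant resolution and invoke the canonical independence of $\ast$ from the chosen blowup to conclude.
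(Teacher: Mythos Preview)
Your approach is correct and rests on the same geometric idea as the paper---the involution $\tau(x,y)=(x^{-1},xy)$---but the paper's implementation is more economical and avoids precisely the ``principal technical hurdle'' you identify.

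Instead of lifting $\tau$ to an automorphism $\widetilde{\tau}$ of $\widetilde{\sP}$, the paper simply observes that the morphism
\[
\sigma\defeq(\inv\circ\pr_1\circ f',\;\lambda')\colon\big(\widetilde{\sP},\overline{f}^{-1}(T')\big)\longrightarrow(\sP,T')
\]
is already well defined from the existing data (no blowup analysis needed) and realizes $\tau$ on the open part. One then has $\lambda'=\pr_2\circ\sigma$ and $\pr_1\circ f'=\inv\circ\pr_1\circ\sigma$, so
\[
\sE\ast\sF\cong\pr_{2,+}\sigma_+\big(\sigma^!\pr_1^!\inv^*\sE\totimes f'^!\pr_2^!\sF\big),
\]
and the projection formula is applied to $\sigma$ rather than to $f'$. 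The remaining identity $\sigma_+f'^!\pr_2^!\sF\cong f'_+\sigma^!\pr_2^!\sF=f'_+\lambda'^!\sF$ holds because $\sF$ is an overconvergent isocrystal and both $\sigma$ and $f'$ realize isomorphisms (mutually inverse, since $\tau$ is an involution) on $\mathbb{G}_{\rmm,k}\times\mathbb{G}_{\rmm,k}$. In effect the paper replaces your pair $(\widetilde{\tau},f')$ by the pair $(\sigma,f')$ of morphisms to the base, so that no automorphism of $\widetilde{\sP}$ is ever required.

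What each approach buys: yours makes the symmetry manifest on $\widetilde{\sP}$ and then descends via the projection formula for $f'$, at the cost of constructing $\widetilde{\tau}$ (or passing to a dominating equivariant resolution and invoking independence of the blowup). The paper's version trades that construction for a second morphism $\sigma$ to $(\sP,T')$, which is free, and uses the projection formula for $\sigma$; the role of your identity $\widetilde{\tau}_+=\widetilde{\tau}^!$ is played by the swap $\sigma_+f'^!\cong f'_+\sigma^!$ on isocrystals. Both arguments are valid; the paper's is shorter.
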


\begin{proof}
  Let $\sigma\colon\big(\widetilde{\sP}, \overline{f}^{-1}(T')\big)\to(\sP,T')$ denote the morphism
  defined by $\overline{\sigma}=(\overline{\inv}\circ\overline{\pr_1}\circ\overline{f}, \overline{\lambda})$.
  Note that $\sigma$ represents the isomorphism $\bG_{\rmm,k}\times\bG_{\rmm,k}\to\bG_{\rmm,k}\times\bG_{\rmm,k}; (x,y)\mapsto (x^{-1}, xy)$.
  Since $\lambda'=\pr_{2}\circ \sigma$ and since $\sigma_+$ preserves coherence,
  we have an identification $\lambda'_+=\pr_{2,+}\circ\sigma_+$.
  By using this fact, we have
\[
  \sE\ast\sF=\lambda'_+f'^!\left(\pr_1^!\sE\totimes_{(\sP,T')}\pr_2^!\sF\right)
  \cong \pr_{2,+}\sigma_{+}\left(f'^!\pr_1^!\sE\totimes_{(\widetilde{\sP},\overline{f}^{-1}(T'))} f'^!\pr_2^!\sF\right).
\]
Moreover, since $\pr_1\circ f'=\inv\circ\pr_1\circ\sigma$, and since each of $f'^!$, $\inv^!$ and $\pr_1^!$ preserves coherence,
we have an identification $f'^!\pr_1^!=\sigma^!\pr_1^!\inv^\ast$ and therefore
\begin{align*}
  \pr_{2,+}\sigma_{+}\left(f'^!\pr_1^!\sE\totimes f'^!\pr_2^!\sF\right)
  &\cong \pr_{2,+}\sigma_{+}\left(\sigma^!\pr_1^!\inv^\ast\sE\totimes f'^!\pr_2^!\sF\right)\\
  &\cong \pr_{2,+}\left(\pr_1^!\inv^\ast\sE\totimes\sigma_{+}f'^!\pr_2^!\sF\right).
\end{align*}
Since $\sigma$ represents an involution on $\bG_{\rmm,k}\times\bG_{\rmm,k}$
and since $\sF$ is an overconvergent isocrystal,
we have $\sigma_{+}f'^!\pr_2^!\sF=f'_+\sigma^!\pr_2^!\sF=f'_+\lambda'^!\sF$, which completes the proof.
\end{proof}

\begin{proposition}
    \label{prop:fourierandconv}
    We denote by $j\colon \big(\widehat{\bP^1_V},\{0,\infty\}\big)\to \big(\widehat{\bP^1_V},\{\infty\}\big)$ the morphism of d-couples such that
    $\overline{j}=\id_{\hbPV}$. \textup{(}Thus, $j$ realizes the inclusion $\bG_{\rmm,k}\hookrightarrow\bA^1_k$.\textup{)}
    Let $\sM$ be an object of $D^{\rb}_{\coh}\big(\sD^{\dag}_{\widehat{\bP^1_V},\bQ}(\pdag{\{0,\infty\}})\big)$,
    and assume that $j_+\inv^{\ast}\sM$ belongs to $D^{\rb}_{\coh}\big(\sD^{\dag}_{\widehat{\bP^1_V},\bQ}(\pdag{\{\infty\}})\big)$.
    Then, we have a natural isomorphism
    \begin{equation}
        j^{\ast}\big(\FT_{\pi}(j_{+}\inv^{\ast}\sM)\big) \cong \sM\ast (j^{\ast}\sL_{\pi})[-1]
        \label{eq:fourierandconv}
    \end{equation}
\end{proposition}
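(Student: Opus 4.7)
The plan is to start from $j^{\ast}\FT_{\pi}(j_+\inv^{\ast}\sM)$, unwind the definition of the Fourier transform, and transport the outer $j^{\ast}$ past the various cohomological operations until the resulting expression matches the right-hand side of \eqref{eq:fourierandconv} via Lemma \ref{lem:symmetricgeneralization}. The natural bridge is the morphism of d-couples $J\colon (\sP, T')\to (\sP, T)$ with underlying map $\id_{\sP}$ (so $J^{\ast}$ merely ``enlarges the allowed singularities''), together with the analogous $J'\colon (\widetilde{\sP}, \overline{f}^{-1}(T'))\to (\widetilde{\sP}, \overline{f}^{-1}(T))$ on the blow-up. These fit into the evident commuting squares $p_i\circ J = j\circ \pr_i$ for $i=1,2$, $f\circ J' = J\circ f'$, and $\lambda\circ J' = j\circ \lambda'$.

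First, I would apply base change to the cartesian square with $J, j, p_2, \pr_2$, obtaining $j^{\ast}p_{2,+}\cong \pr_{2,+}J^{\ast}$, and combine this with the compatibility of $J^{\ast}$ with $\totimes$ to rewrite
\[
j^{\ast}\FT_{\pi}(j_+\inv^{\ast}\sM) \cong \pr_{2,+}\bigl(J^{\ast}p_1^{!}j_+\inv^{\ast}\sM \totimes_{(\sP,T')} J^{\ast}\sN_{\pi}\bigr).
\]
For the first tensor factor, the relation $p_1\circ J = j\circ \pr_1$ and the functoriality of $!$-pullback give $J^{\ast}p_1^{!}j_+\inv^{\ast}\sM \cong \pr_1^{!}j^{\ast}j_+\inv^{\ast}\sM$; since $\overline{j} = \id$ and $j_+$ (resp.\ $j^{\ast}$) is merely the restriction (resp.\ extension) of scalars along $\sD^{\dag}_{\hbPV,\bQ}(\pdag{\{\infty\}})\hookrightarrow \sD^{\dag}_{\hbPV,\bQ}(\pdag{\{0,\infty\}})$, the composite $j^{\ast}j_+$ is the identity on objects whose $\sD^{\dag}$-module structure is already defined over the larger sheaf, which yields $\pr_1^{!}\inv^{\ast}\sM$. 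For the second factor, I would apply base change once more to the square with $J, J', f, f'$ to get $J^{\ast}f_+\cong f'_+J'^{\ast}$, and then use $J'^{\ast}\lambda^{!}\cong \lambda'^{!}j^{\ast}$ to conclude $J^{\ast}\sN_{\pi}\cong f'_+\lambda'^{!}(j^{\ast}\sL_{\pi}[-1])$.

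Assembling these identifications produces
\[
j^{\ast}\FT_{\pi}(j_+\inv^{\ast}\sM) \cong \pr_{2,+}\bigl(\pr_1^{!}\inv^{\ast}\sM \totimes_{(\sP,T')} f'_+\lambda'^{!}(j^{\ast}\sL_{\pi}[-1])\bigr),
\]
which is exactly the formula produced by Lemma \ref{lem:symmetricgeneralization} for $\sM\ast (j^{\ast}\sL_{\pi})[-1]$, applied with $\sE = \sM$ and $\sF = j^{\ast}\sL_{\pi}$ (an overconvergent isocrystal on $\bG_{\rmm,k}$, since $\sL_{\pi}$ is overconvergent on $\bA^1_k$).

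The main obstacle I anticipate is the careful bookkeeping of the coherence hypotheses required at each base change step. The use of $j^{\ast}p_{2,+}\cong \pr_{2,+}J^{\ast}$ requires $p_1^{!}j_+\inv^{\ast}\sM \totimes \sN_{\pi}$ to be coherent on $(\sP, T)$, which is precisely guaranteed by the hypothesis that $j_+\inv^{\ast}\sM$ is coherent, combined with Remark \ref{rem:cohoffourier}. The use of $J^{\ast}f_+\cong f'_+J'^{\ast}$ requires coherence of $J'^{\ast}\lambda^{!}(\sL_{\pi}[-1])\cong \lambda'^{!}(j^{\ast}\sL_{\pi}[-1])$, which reduces to standard coherence statements for $\lambda'^{!}$ and $j^{\ast}$ since $\overline{\lambda}$ is smooth on the relevant open part and $\sL_{\pi}$ is overconvergent. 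Finally, the derived identity $j^{\ast}j_+\cong \id$ on the essential image of $\inv^{\ast}$ should be verified via the flatness of the inclusion of sheaves of differential operators, but is essentially formal.
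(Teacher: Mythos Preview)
Your overall strategy coincides with the paper's---both arguments reduce to Lemma~\ref{lem:symmetricgeneralization} after unwinding the Fourier transform---but your first step has a genuine gap. The square formed by $J,\,j,\,p_2,\,\pr_2$ is \emph{not} cartesian as a diagram of d-couples: the fibre product of $p_2\colon(\sP,T)\to(\hbPV,\{\infty\})$ and $j\colon(\hbPV,\{0,\infty\})\to(\hbPV,\{\infty\})$ is $(\sP,T_A)$ with
\[
T_A=(\{\infty\}\times\bP^1_k)\cup(\bP^1_k\times\{0,\infty\}),
\]
and $T_A\subsetneq T'$ (the component $\{0\}\times\bP^1_k$ is missing). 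Hence the base-change isomorphism $j^{\ast}p_{2,+}\cong \pr_{2,+}J^{\ast}$ is not available from the formalism recalled in the paper. The failure is not merely bookkeeping: for a general coherent object on $(\sP,T_A)$ the unit $\id\to j_{A,+}j_A^{\ast}$ is not an isomorphism, so one cannot freely replace $T_A$ by $T'$.

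The paper handles this by introducing the intermediate d-couple $(\sP,T_A)$ explicitly. One first applies the genuinely cartesian base change $j^{\ast}p_{2,+}\cong \pr_{2,A,+}j''^{\ast}$ to land in $D^{\rb}_{\coh}\big(\sD^{\dag}_{\sP,\bQ}(\pdag{T_A})\big)$. Then a \emph{second} cartesian base change, now along the first projection, gives $\pr_{1,A}^{!}\,j_+\cong j_{A,+}\pr_1^{!}$; combined with the projection formula this exhibits the object $\pr_{1,A}^{!}\,j_+\inv^{\ast}\sM\totimes j''^{\ast}\sN_{\pi}$ as lying in the essential image of $j_{A,+}$. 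Only after this is established can one compose with $\pr_{2,A,+}$ and rewrite the result as a $\pr_{2,+}$ on $(\sP,T')$, via $\pr_2=\pr_{2,A}\circ j_A$. Your identifications of the two tensor factors $J^{\ast}p_1^{!}j_+\inv^{\ast}\sM\cong\pr_1^{!}\inv^{\ast}\sM$ and $J^{\ast}\sN_{\pi}\cong f'_+\lambda'^{!}(j^{\ast}\sL_{\pi}[-1])$ are correct and agree with the paper's endpoint, but they presuppose the passage from $T_A$ to $T'$, which is exactly the step requiring the extra argument.
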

\begin{proof}
    Put $(\sP, T_A)\defeq \big(\widehat{\bP^1_V}, \{\infty\}\big)\times\big(\widehat{\bP^1_V},\{0,\infty\}\big)$.
    Let $\pr_{1,A}\colon  (\sP,T_A) \to\big(\widehat{\bP^1_V},\{\infty\}\big)$ \resp{$\pr_{2,A}\colon (\sP,T_A)\to\big(\widehat{\bP^1_V},\{0,\infty\})$,
        $j_A\colon  (\sP,T')\to (\sP,T_A)$}
    be the morphisms of d-couples defined by the first projection \resp{the second projection, the identity morphism} on $\sP=\widehat{\bP^1_V}\times\widehat{\bP^1_V}$.
    This morphism represents the first projection $\mathbb{A}^1_k\times\bG_{\rmm,k}\to\bA^1_k$
    \resp{the second projection $\bA^1_k\times\bG_{\rmm,k}\to\bG_{\rmm,k}$,
    and the inclusion $\bG_{\rmm,k}\times\bG_{\rmm,k}\hookrightarrow\bA^1_k\times\bG_{\rmm,k}$}.

    Then, the definition of Fourier transform, we obtain a natural identification
    \[
        j^{\ast}\big(\FT_{\pi}({j}_+\inv^{\ast}\sM)\big)
        = \pr_{2,A,+}\big({\pr_{1,A}}^!{j}_+\inv^{\ast}\sM\totimes j''^{\ast}f_+\lambda^!\sL_{\pi}\big)[-1],
    \]
    where $j''\colon(\sP,T_A)\to(\sP,T)$ is the morphism of d-couples defined by $\overline{j''}=\id_{\sP}$,
    thus represents the inclusion $\bA^1_k\times\bG_{\rmm,k}\hookrightarrow\bA^1_k\times\bA^1_k$.
    Here, in the right-hand side, by the coherence assumption and Remark \ref{rem:cohoffourier},
    \[
      \pr_{1,A}^!j_+\inv^{\ast}\sM\totimes j''^{\ast}f_+\lambda^!\sL_{\pi}
      \cong j''^{\ast}\big(p_1^!j_+\inv^{\ast}\sM\totimes f_+\lambda^!\sL_{\pi}\big)
    \]
    belongs to $D^{\rb}_{\coh}\big(\sD^{\dag}_{\sP,\bQ}(\pdag{T_A})\big)$.

    Now, again by the coherence assumption, we have a base change isomorphism
    \[
        \pr_{1,A}^!j_+\inv^{\ast}\sM\cong j_{A,+}\pr_1^!\inv^\ast\sM.
    \]
    Moreover, since $j^!_{A,+}j''^!f_+\lambda^!\sL_{\pi}\cong f'_+\lambda'^!j^{\ast}\sL_{\pi}$,
    we see that
    \begin{align*}
        \pr_{1,A}^!j_+\inv^{\ast}\sM\totimes j''^\ast f_+\lambda^!\sL_{\pi}
        & \cong j_{A,+}\pr_{1}^!\,\inv^{\ast}\sM\totimes j''^{\ast}f_+\lambda^!\sL_{\pi}\\
        &\cong j_{A,+}\big(\pr_1^!\,\inv^{\ast}\sM\totimes j_{A,+}^!j''^!f_+\lambda^!\sL_{\pi}\big) \\
        & \cong
        j_{A,+}\left(\pr_{1}^!\inv^{\ast}\sM\totimes f'_+\lambda'^!j^{\ast}\sL_{\pi}\right).
    \end{align*}
    Since this object belongs to $D^{\rb}_{\coh}\big(\sD^{\dag}_{\sP,\bQ}(\pdag{T_A})\big)$ and $\pr_2=\pr_{2,A}\circ j_A$, we see that
    \[
        \pr_{2,A,+}j_{A,+}\left(\pr_1^!\inv^{\ast}\sM\totimes f'_+\lambda'^!j^{\ast}\sL_{\psi}\right)
        \cong
        \pr_{2,+}\left(\pr_1^!\inv^{\ast}\sM\totimes f'_+\lambda'^!j^{\ast}\sL_{\psi}\right).
    \]
    By Lemma \ref{lem:symmetricgeneralization}, this is isomorphic to the right-hand side of (\ref{eq:fourierandconv}) as desired.
\end{proof}

\section{Hypergeometric arithmetic $\mathscr{D}$-modules.}

\subsection{Definitions and fundamental properties.}

\begin{para}
Firstly, let us define a hypergeometric arithmetic $\sD$-module on $\bG_{\rmm,k}$
as a coherent $\sD^{\dag}_{\hbPV,\bQ}(\pdag{\{0,\infty\}})$-module.
Note that the category of coherent $\sD^{\dag}_{\hbPV,\bQ}(\pdag{\{0,\infty\}})$-modules
is identified with the category of coherent $B_1(K)^{\dag}$-modules \cite[5.3.3 and p.915]{Huyghe98},
where
\[
  B_1(K)^{\dag}\defeq\bigg\{\sum_{l\in\mathbb{Z},k\in\mathbb{N}}a_{l,k}x^l\partial^{[k]} \,\bigg|\, a_{l,k}\in K, \exists C>0, \exists \eta<1, |a_{l,k}|<C\eta^{\max(l,-l)+k}\bigg\}.
\]
\end{para}

\begin{definition}
    \label{def:hgoperators}
    Let $\alpha_1,\ldots,\alpha_m,\beta_1,\ldots,\beta_n$ be elements of $K$.
    We write the sequence $\alpha_1,\ldots,\alpha_m$ by $\balpha$ and $\beta_1,\ldots,\beta_n$ by $\bbeta$.
\begin{itemize}
    \item[\textup{(i)}] We define the hypergeometric operator $\Hyp_{\pi}(\balpha;\bbeta)=\Hyp_{\pi}(\alpha_1,\ldots,\alpha_m;\beta_1,\ldots,\beta_n)$ to be
    \[
        \Hyp_{\pi}(\balpha;\bbeta) \defeq \prod_{i=1}^m (x\partial-\alpha_i) - (-1)^{m+np}\pi^{m-n}x\prod_{j=1}^n(x\partial-\beta_j)
    \]

\item[\textup{(ii)}] We define a $B_1(K)^{\dag}$-module $\sH_{\pi}(\balpha;\bbeta)=\sH_{\pi}(\alpha_1,\ldots,\alpha_m;\beta_1,\ldots,\beta_n)$ by
\[
    \sH_{\pi}(\balpha;\bbeta) \defeq B_1(K)^{\dag}/B_1(K)^{\dag}\Hyp_{\pi}(\balpha;\bbeta).
\]
This is also considered as an object of $D^{\rb}_{\coh}\big(\sD^{\dag}_{\hbPV,\bQ}(\pdag{\{0,\infty\}})\big)$
by putting it on degree zero.
\end{itemize}
\end{definition}

\begin{remark}
    By definition, $\sH_{\pi}(\emptyset; \emptyset)$ is the delta module at $1$.

    If $(m,n)=(1,0)$, we may immediately check the isomorphism
    $\sH_{\pi}(\alpha;\emptyset)\cong j^{\ast}\sL_{\pi}\otimes^{\dag}\sK_{\alpha}$,
    where $\sK_{\alpha}$ is the Kummer isocrystal associated with $\alpha$.
    Similarly, if $(m,n)=(1,0)$, we get $\sH_{\pi}(\emptyset;\beta)\cong\inv^{\ast}\big(j^{\ast}\sL_{(-1)^p\pi}\otimes^{\dag}\sK_{-\beta})$.
(Recall that $\inv\colon(\hbPV, \{0,\infty\})\to(\hbPV,\{0,\infty\})$ denotes the morphism of d-couples
defined by $\overline{\inv}\colon x\mapsto x^{-1}$.)
\label{rem:m+n=1}
\end{remark}

\begin{para}
The goal of this article is to prove, under a $p$-adic non-Liouville condition,
 that $\sH_{\pi}(\balpha;\bbeta)$ can be obtained inductively
in terms of multiplicative convolution.
\end{para}

\begin{para}
The following lemma is obtained by a straight-forward calculation as in \cite[Lemma 3.1.3]{Miyatani}.
(In loc. cit., (ii) is stated in the case where $\gamma\in\frac{1}{q-1}\mathbb{Z}$, but this condition is not necessary.)
\end{para}

\begin{lemma}[{\cite[Lemma 3.1.3]{Miyatani}}]
    \label{lem:calcofhyp}
    Under the notation in Definition \ref{def:hgoperators}, $\sH_{\pi}(\balpha; \bbeta)$ has the following properties.
\begin{itemize}
    \item[\textup{(i)}] 
        $\inv^{\ast}\sH_{\pi}(\balpha; \bbeta)$ is isomorphic to $\sH_{(-1)^p\pi}(-\bbeta, -\balpha)$,
        where $-\balpha$ \resp{$-\bbeta$} denotes the sequence $-\alpha_1,\ldots,-\alpha_m$ \resp{$-\beta_1,\ldots,-\beta_n$}.
    \item[\textup{(ii)}] Let $\gamma$ be an element of $\bZ_p$. Then, $\sH_{\pi}(\balpha;\bbeta)\otimes^{\dag}_{\sO_{\hbPV,\bQ}(\pdag{\{0,\infty\}})}\sK_{\gamma}$ is isomorphic to
        $\sH_{\pi}(\balpha+\gamma;\bbeta+\gamma)$, where $\balpha+\gamma$ \resp{$\bbeta+\gamma$} denotes the sequence
        $\alpha_1+\gamma,\ldots,\alpha_m+\gamma$ \resp{$\beta_1+\gamma,\ldots,\beta_n+\gamma$}.
\end{itemize}
\end{lemma}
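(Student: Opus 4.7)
The plan is to verify both parts by an explicit calculation inside the ring $B_1(K)^\dag$, completely in the spirit of \cite[Lemma 3.1.3]{Miyatani}. In both cases, the content of the statement is that applying the relevant functor to $\sH_\pi(\balpha;\bbeta)$ amounts to an explicit substitution of $x$ and $x\partial$ in the generator $\Hyp_\pi(\balpha;\bbeta)$ of the defining left ideal of $\sH_\pi(\balpha;\bbeta)$; the resulting operator is then identified with $\Hyp_\pi$ evaluated at the claimed new parameters, up to multiplication by a unit of $B_1(K)^\dag$.

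For (i), the involution $\inv^\ast$ is induced by the ring automorphism $\tau$ of $B_1(K)^\dag$ given on generators by $x\mapsto x^{-1}$ and $\partial\mapsto -x^2\partial$, so that $x\partial\mapsto -x\partial$. Substituting into $\Hyp_\pi(\balpha;\bbeta)$, clearing the $x^{-1}$ in the second summand by multiplying on the left by the unit $(-1)^n x$, and rearranging signs using the identity $(-1)^{p(n-m)}\pi^{n-m}=\bigl((-1)^p\pi\bigr)^{n-m}$, I obtain a constant multiple of $\Hyp_{(-1)^p\pi}(-\bbeta;-\balpha)$, which generates the same left ideal and hence proves (i).

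For (ii), the Kummer isocrystal $\sK_\gamma$, defined for any $\gamma\in\mathbb{Z}_p$ with no further rationality assumption, is a rank one $\sO_{\hbPV,\bQ}(\pdag{\{0,\infty\}})$-module generated by a cyclic section $e_\gamma$ with $\partial e_\gamma=(\gamma/x)e_\gamma$. Under the natural identification of $M\otimes^\dag\sK_\gamma$ with $M$ as $\sO$-modules via $m\otimes e_\gamma\mapsto m$, the $\partial$-action is transported to $\partial+\gamma/x$, and hence $x\partial$ to $x\partial+\gamma$. Applying this substitution to $\Hyp_\pi(\balpha;\bbeta)$ shows that the annihilator of the cyclic generator $1\otimes e_\gamma$ of $\sH_\pi(\balpha;\bbeta)\otimes^\dag\sK_\gamma$ is the left ideal generated by $\Hyp_\pi(\balpha+\gamma;\bbeta+\gamma)$, giving the required isomorphism. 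The stated independence from the condition $\gamma\in\frac{1}{q-1}\mathbb{Z}$ used in loc.\ cit.\ is automatic, because nowhere in this calculation does one need $\gamma$ to come from a character of $k^\times$.

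The only point requiring genuine bookkeeping is the consistency of signs in (i), in particular the appearance of $(-1)^p$ in the modified Dwork element $(-1)^p\pi$ and the swap $(-1)^{m+np}\leftrightarrow(-1)^{n+mp}$ induced by interchanging the roles of $m$ and $n$ when passing from $(\balpha;\bbeta)$ to $(-\bbeta;-\balpha)$. I expect this to be the only obstacle; it is purely computational rather than conceptual.
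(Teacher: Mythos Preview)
Your proposal is correct and follows exactly the approach the paper indicates: the paper does not give a proof at all but simply states that the lemma ``is obtained by a straight-forward calculation as in \cite[Lemma 3.1.3]{Miyatani}'', noting only that the rationality hypothesis on $\gamma$ in (ii) is unnecessary. Your outline supplies precisely that calculation---the ring automorphism $x\mapsto x^{-1}$, $x\partial\mapsto -x\partial$ for (i) and the twist $x\partial\mapsto x\partial+\gamma$ for (ii)---and correctly identifies the sign bookkeeping in (i) as the only nontrivial point; one small caution is that in (ii) the operator annihilating $1\otimes e_\gamma$ is obtained by applying the \emph{inverse} substitution $x\partial\mapsto x\partial-\gamma$ to $\Hyp_\pi(\balpha;\bbeta)$, which indeed yields $\Hyp_\pi(\balpha+\gamma;\bbeta+\gamma)$ as you claim.
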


\subsection{$p$-adic Liouville numbers.}

In this subsection, we recall the notion of $p$-adic Liouville numbers and
give a lemma which we need later.

\begin{definition}
    Let $\alpha$ be an element of $\mathbb{Z}_p$.
    We say that $\alpha$ is a \emph{$p$-adic Liouville number} if one of the two power series,
    \[
        \sum_{k\geq 0, k\neq\alpha} \frac{t^k}{\alpha-k} \quad\text{or}\quad
        \sum_{k\geq 0, k\neq-\alpha} \frac{t^k}{\alpha+k}
    \]
    has radius of convergence strictly less than $1$.
\end{definition}

\begin{proposition}[{\cite[13.1.7]{Kedlaya2010}}]
    Let $\alpha$ be an element of $\mathbb{Z}_p\setminus\mathbb{Z}$
    which is not a $p$-adic Liouville number.
    Then, the power series
    \[
        \sum_{k=0}^{\infty}\frac{x^k}{\alpha(1-\alpha)(2-\alpha)\dots(k-\alpha)}
    \]
    has radius of convergence greater than or equal to $p^{-1/(p-1)}$.
    \label{prop:radiusofconvergence}
\end{proposition}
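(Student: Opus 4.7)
The plan is to reduce the claimed radius bound to an estimate on $v_p$, handle the main term combinatorially, and call on the non-Liouville hypothesis only to control a residual tail term. Setting $c_k \defeq \prod_{j=0}^{k}(j-\alpha)$, the inequality $R\geq p^{-1/(p-1)}$ for the radius of convergence will follow (via $R=\liminf|c_k|^{1/k}$) from
\[
    \limsup_{k\to\infty}\frac{v_p(c_k)}{k}\leq\frac{1}{p-1}.
\]

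Next I would expand $v_p(c_k)=\sum_{j=0}^{k}v_p(j-\alpha)$ in layers of valuation, using the standard identity
\[
    v_p(c_k)=\sum_{n\geq 1}\#\{j\in[0,k]\cap\mathbb{Z} : v_p(j-\alpha)\geq n\}.
\]
For each $n\geq 1$ let $\alpha_n\in[0,p^n)$ be the unique integer with $\alpha_n\equiv\alpha\pmod{p^n}$. The condition $v_p(j-\alpha)\geq n$ then becomes $j\equiv\alpha_n\pmod{p^n}$, so the $n$-th count is bounded by $\lfloor k/p^n\rfloor+1$ when $\alpha_n\leq k$ and vanishes otherwise; in particular it vanishes for $n>N_k\defeq\max_{0\leq j\leq k}v_p(j-\alpha)$. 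Splitting the sum at $n=\lfloor\log_p k\rfloor$ and summing the resulting geometric series would yield the clean estimate
\[
    v_p(c_k)\leq\frac{k}{p-1}+O(\log k)+N_k.
\]

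The main (and essentially only) obstacle will be to show $N_k=o(k)$; this is where the non-Liouville hypothesis enters. By definition the series $\sum_k t^k/(\alpha-k)$ has radius $\geq 1$, i.e.\ $\limsup_k v_p(\alpha-k)/k\leq 0$, hence $\lim_{k\to\infty}v_p(\alpha-k)/k=0$ by nonnegativity. Now $N_k=v_p(\alpha-j^{\ast})$ is attained at $j^{\ast}=\alpha_{N_k}\in[0,k]$, so, given $\epsilon>0$, I would pick $J(\epsilon)$ with $v_p(\alpha-j)<\epsilon j$ for $j\geq J(\epsilon)$ and bound the finitely many values $v_p(\alpha-j)$ with $j<J(\epsilon)$ by a constant $C$. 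This gives $N_k\leq\max(C,\epsilon k)\leq\epsilon k$ for $k$ large, so $N_k/k\to 0$, and combining the two estimates produces $\limsup_k v_p(c_k)/k\leq 1/(p-1)$.

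The hard part will be precisely this step $N_k=o(k)$: if $\alpha$ were a $p$-adic Liouville number one could produce, along a subsequence, integers $j_k\leq k$ with $v_p(\alpha-j_k)$ of order $k$, forcing $N_k$ of the same order and breaking the bound. The rest of the argument is a bookkeeping of a geometric series. It is worth noting that only the first of the two power series appearing in the definition of a Liouville number intervenes here, since $c_k$ involves only the factors $j-\alpha$ with $j\geq 0$; the symmetric condition will be invoked elsewhere when the dual series is needed.
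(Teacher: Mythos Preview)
The paper does not prove this proposition; it is quoted verbatim from Kedlaya's book with the citation \cite[13.1.7]{Kedlaya2010} and is used as a black box (only in the proof of Lemma~\ref{lem:evaluation}(ii)). So there is no ``paper's own proof'' to compare against.

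That said, your argument is correct and is essentially the standard one. The layer decomposition
\[
    v_p(c_k)=\sum_{n\geq 1}\#\{\,0\leq j\leq k : v_p(j-\alpha)\geq n\,\}
\]
together with the arithmetic-progression count gives exactly the bound $v_p(c_k)\leq k/(p-1)+O(\log_p k)+N_k$, and the reduction of the non-Liouville hypothesis to $\limsup_k v_p(\alpha-k)/k\leq 0$, hence $N_k=o(k)$, is the crux and is handled cleanly. Two minor points worth tightening in a final write-up: (a) make explicit that $v_p(j-\alpha)\in\mathbb{Z}_{\geq 0}$ because $\alpha\in\mathbb{Z}_p\setminus\mathbb{Z}$, so the layer decomposition is legitimate and $N_k<\infty$; (b) your closing remark that only the first of the two defining series is used is accurate, and it is good that you flag it, since Lemma~\ref{lem:evaluation}(ii) is later applied to translates $l-\alpha$ with $l\geq 0$, which again only involves that half of the condition.
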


\begin{lemma}
    Let $l$ be a non-negative integer and let $\alpha$ be an element of $\mathbb{Z}_p$.

    \textup{(i)} For any non-negative integer $N\geq l$, the following inequality holds:
    \[
        \left|\prod_{s=l}^N(s-\alpha)\right|\leq p^{-(N-l+1)/(p-1)+1}(N-l+1).
    \]

    \textup{(ii)} Assume that $\alpha$ is neither an integer nor a $p$-adic Liouville number.
    Then, for all positive real number $r$ with $r<p^{-\frac{1}{p-1}}$, we have
    \[
        \lim_{k\to\infty}\left|\prod_{s=l}^{l+k}(s-\alpha)\right|r^{-k}=\infty.
    \]
    \label{lem:evaluation}
\end{lemma}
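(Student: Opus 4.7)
For part (i), the plan is to reduce the estimate to the classical formula $v_p(M!) = (M - s_p(M))/(p-1)$, where $M = N - l + 1$ is the number of factors and $s_p(M)$ is the base-$p$ digit sum of $M$. If $\alpha$ coincides with some integer in $\{l,\dots,N\}$, one of the factors is zero and the inequality is trivial; otherwise, the key identity is
\[
v_p\!\left(\prod_{s=l}^N (s - \alpha)\right) \;=\; \sum_{k \geq 1} \#\{\,s \in [l, N] : s \equiv \alpha \pmod{p^k}\,\}.
\]
Since among $M$ consecutive integers any fixed residue class modulo $p^k$ is represented at least $\lfloor M/p^k\rfloor$ times, the right-hand side is bounded below by $\sum_{k \geq 1} \lfloor M/p^k \rfloor = v_p(M!)$. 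Combined with the digit-sum bound $s_p(M) \leq (p-1)(1 + \log_p M)$ coming from $M$ having $\lfloor \log_p M\rfloor + 1$ base-$p$ digits, this gives $v_p(\prod) \geq M/(p-1) - 1 - \log_p M$, which is exactly the stated inequality after applying $p^{-(\cdot)}$.

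For part (ii), I would first reduce to the case $l = 0$ via the rewriting $\prod_{s=l}^{l+k}(s-\alpha) = \prod_{s=0}^k\bigl(s - (\alpha - l)\bigr)$. The hypothesis on $\alpha$ is preserved when $\alpha$ is replaced by $\alpha - l$, since both the non-integer property and the non-$p$-adic Liouville property are invariant under integer translation: the two defining power series change only by a shift of indices (and finitely many redefined terms), which does not affect their radii of convergence. Once $l = 0$, the result follows directly from Proposition \ref{prop:radiusofconvergence}: after observing that $|\alpha(1-\alpha)\cdots(k-\alpha)| = \bigl|\prod_{s=0}^{k}(s-\alpha)\bigr|$, the lower bound $p^{-1/(p-1)}$ on the radius of convergence translates via the Cauchy--Hadamard formula into $\liminf_{k}\bigl|\prod_{s=0}^{k}(s-\alpha)\bigr|^{1/k}\geq p^{-1/(p-1)}$. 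For any $r < p^{-1/(p-1)}$, pick $\rho$ with $r < \rho < p^{-1/(p-1)}$; then for $k$ sufficiently large one has $\bigl|\prod_{s=0}^{k}(s-\alpha)\bigr|\geq \rho^k$, whence $\bigl|\prod\bigr|\,r^{-k}\geq (\rho/r)^k \to \infty$.

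\emph{Main obstacle.} There is no genuine difficulty in this lemma: part (i) is a routine computation built on the Legendre-type formula for $v_p(M!)$, and part (ii) is essentially a rephrasing of Proposition \ref{prop:radiusofconvergence} together with the invariance of non-Liouvilleness under integer shifts. The one place that requires care is in (i), where one must keep track of the logarithmic term so that the linear factor $(N-l+1)$ on the right-hand side emerges cleanly from the digit-sum estimate rather than being absorbed into the exponential factor.
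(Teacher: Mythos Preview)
Your proposal is correct and follows essentially the same approach as the paper's own proof: for (i) both arguments bound $v_p$ of the product below by $v_p\big((N-l+1)!\big)$ via the counting $t_m\geq\lfloor M/p^m\rfloor$ and then invoke the standard estimate $v_p(M!)\geq M/(p-1)-\log_p M-1$ (which you derive via the digit-sum formula, while the paper quotes it directly); for (ii) both arguments replace $\alpha$ by its integer translate so that Proposition~\ref{prop:radiusofconvergence} applies, and then read off the limit from the radius of convergence. The only cosmetic difference is that the paper concludes (ii) by noting that the terms of a convergent series tend to zero, whereas you pass through Cauchy--Hadamard and an auxiliary $\rho$; these are equivalent.
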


\begin{proof}
    (i) The proof is the same as that of the first inequality of \cite[3.1.5]{Miyatani}. We include a proof here for the convenience for the reader.

    Since the inequality is trivial if $\alpha\in\{l,\dots,N\}$, we assume that this is not the case.
    For each positive integer $m$, let $t_m$ denote the number of $(s-\alpha)$'s for $s=l,...,N$ that belongs to $p^m\mathbb{Z}_p$:
    \[
        t_m\defeq \#\Set{s\in\{l,\dots,N\} | s-\alpha\in p^m\mathbb{Z}_p}.
    \]
    Then, we have $v_p\left(\prod_{s=l}^N(s-\alpha)\right)=\sum_{m=1}^{\infty}t_m$
    (note that the right-hand side is essentially a finite sum).
    Now, since there is exactly one multiple of $p^m$ in every $p^m$ successive $(s-\alpha)$'s,
    we have $t_m\geq \big\lfloor\frac{N-l+1}{p^m}\big\rfloor$. This shows that
    \[
        v_p\left(\prod_{s=l}^N(s-\alpha)\right)=\sum_{m=1}^{\infty}t_m \geq \sum_{m=1}^{\infty}\left\lfloor\frac{N-l+1}{p^m}\right\rfloor.
    \]
    The right-hand side equals $v_p\big( (N-l+1)!\big)$ and it is well-known that, for any positive integer $M$ we have $v_p(M!)\geq \frac{M}{p-1}-\log_pM-1$.
    Therefore, we have $v_p\left(\prod_{s=l}^N(s-\alpha)\right)\geq \frac{N-l+1}{p-1}-1-\log_p(N-l+1)$,
    from which the assertion follows.

    (ii) Since $l-\alpha$ is neither an integer nor a $p$-adic Liouville number,
    Proposition \ref{prop:radiusofconvergence} shows that the power series
    \[
        \sum_{k=0}^{\infty}\frac{x^k}{(l-\alpha)(l+1-\alpha)\dots(l+k-\alpha)}
    \]
    has radius of convergence greater than or equal to $p^{-\frac{1}{p-1}}$.
    This means that for all $r\in (0,p^{-\frac{1}{p-1}})$, we have
    \[
        \lim_{k\to\infty}\left|\prod_{s=l}^{l+k}(s-\alpha)\right|^{-1}r^k=0,
    \]
    which shows the claim.
\end{proof}

\subsection{A lemma on hypergeometric arithmetic $\sD$-modules under a $p$-adic non-Liouvilleness condition.}
In this subsection, we establish the following lemma that generalizes \cite[Proposition 3.1.4]{Miyatani}.
This lemma plays a central role in proving the main theorem in this article.

\begin{lemma}
    Let $\alpha_1,\dots,\alpha_m$ and $\beta_1,\dots,\beta_n$ be elements of $\mathbb{Z}_p$,
    and assume that $\alpha_i$'s does not have an integer nor have a $p$-adic Liouville numbers.
    Let $j\colon(\hbPV, \{0,\infty\})\to(\hbPV,\{\infty\})$ be the morphism of d-couples defined by $\overline{j}=\id_{\hbPV}$.
    Then, the following assertions hold.
    \begin{itemize}
        \item[\textup{(i)}]
        $j^{\ast}\big(A_1(K)^{\dag}/A_1(K)^{\dag}\Hyp_{\pi}(\balpha;\bbeta)\big)$ is isomorphic to $\sH_{\pi}(\balpha;\bbeta)$.
        \item[\textup{(ii)}]
        The natural morphism
        \[
            A_1(K)^{\dag}/A_1(K)^{\dag}\Hyp_{\pi}(\balpha;\bbeta)\longrightarrow
            j_{+}j^{\ast}\big(A_1(K)^{\dag}/A_1(K)^{\dag}\Hyp_{\pi}(\balpha;\bbeta)\big)
        \]
        is an isomorphism.
    \end{itemize}
    \label{lem:push}
\end{lemma}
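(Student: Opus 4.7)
The plan is to use Huyghe's $\sD^{\dag}$-affinity to translate both assertions into algebraic statements about modules over the rings $A_1(K)^{\dag}$ and $B_1(K)^{\dag}$. Since $\overline{j}=\id$, the formulas for $f^{\ast}$ and $f_{+}$ recalled earlier in the paper identify $j^{\ast}$ at the level of global sections with the underived base change $B_1(K)^{\dag}\otimes_{A_1(K)^{\dag}}-$, and identify $j_{+}$ on coherent $B_1(K)^{\dag}$-modules with the restriction of scalars along the inclusion $A_1(K)^{\dag}\hookrightarrow B_1(K)^{\dag}$. Part (i) then reduces to the right-exactness computation $B_1(K)^{\dag}\otimes_{A_1(K)^{\dag}}\bigl(A_1(K)^{\dag}/A_1(K)^{\dag}\Hyp_{\pi}(\balpha;\bbeta)\bigr)\cong B_1(K)^{\dag}/B_1(K)^{\dag}\Hyp_{\pi}(\balpha;\bbeta)=\sH_{\pi}(\balpha;\bbeta)$, applied to the cokernel presentation of the left-hand side. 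Part (ii) becomes the assertion that the natural $A_1(K)^{\dag}$-linear map
\[
\phi\colon A_1(K)^{\dag}/A_1(K)^{\dag}H\longrightarrow B_1(K)^{\dag}/B_1(K)^{\dag}H,
\]
where I abbreviate $H:=\Hyp_{\pi}(\balpha;\bbeta)$, is an isomorphism; equivalently, both $A_1(K)^{\dag}\cap B_1(K)^{\dag}H=A_1(K)^{\dag}H$ (injectivity) and $A_1(K)^{\dag}+B_1(K)^{\dag}H=B_1(K)^{\dag}$ (surjectivity) hold.

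The heart of the proof of (ii) is a form of ``overconvergent division'' by $H$ inside $B_1(K)^{\dag}$. For surjectivity, it suffices to show that the negative-power portion of any $Q\in B_1(K)^{\dag}$ can be pushed into $B_1(K)^{\dag}H$, leaving a remainder in $A_1(K)^{\dag}$. The central computational input is the congruence $\prod_{i}(x\partial-\alpha_i)\equiv(-1)^{m+np}\pi^{m-n}x\prod_{j}(x\partial-\beta_j)\pmod{B_1(K)^{\dag}H}$ coming from $H\equiv 0$, combined with the shift identity $x^{-l}(x\partial-\gamma)=(x\partial-(\gamma-l))x^{-l}$. Iterating these identities produces an infinite series that formally rewrites each monomial $x^{-l}\partial^{[k]}$ as a combination of monomials $x^{s}\partial^{[t]}$ with $s,t\geq 0$ modulo $B_1(K)^{\dag}H$; the formal inversion is well-defined term by term on the monomial basis because the assumption that no $\alpha_i$ is an integer forces every occurring scalar factor $s-(\alpha_i-l)$ to be nonzero. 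Injectivity is then handled by a symmetric right-division of elements of $A_1(K)^{\dag}$ by $H$ in $B_1(K)^{\dag}$, forcing the quotient to live in $A_1(K)^{\dag}$.

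The main obstacle is to verify that the series obtained above actually lie in the dagger rings, i.e.\ that their $K$-coefficients $a_{l,k}$ satisfy the bound $|a_{l,k}|<C\eta^{|l|+k}$ for some $\eta<1$. The coefficients involve products and reciprocal products of the form $\bigl|\prod_{s}(s-\alpha_i)\bigr|^{\pm 1}$, and Lemma~\ref{lem:evaluation}---the quantitative expression of the non-Liouville hypothesis via Proposition~\ref{prop:radiusofconvergence}---supplies precisely the control needed: each such product grows fast enough that its reciprocal is dominated by $r^{-k}$ for every $r<p^{-1/(p-1)}$, while the direct products are trivially bounded by $(N-l+1)$ times an explicit power of $p$. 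Combining these bounds with the natural bookkeeping on the orders of $x$ and $\partial^{[k]}$ in the construction yields the required dagger estimates for both the $A_1(K)^{\dag}$ and the $B_1(K)^{\dag}$ parts of the argument. This arithmetic input replaces the Frobenius-structure arguments used in the rational case of \cite{Miyatani}; in the presence of a Liouville parameter among the $\alpha_i$, these estimates break down and the lemma is expected to fail.
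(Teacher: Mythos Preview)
Your plan is essentially correct and rests on the same analytic input as the paper (Lemma~\ref{lem:evaluation}), but the paper organizes part~(ii) differently. Instead of attacking the map $\phi\colon A_1(K)^{\dag}/A_1(K)^{\dag}H\to B_1(K)^{\dag}/B_1(K)^{\dag}H$ head-on inside $B_1(K)^{\dag}$, the paper reduces (ii), via the argument of \cite[3.1.4]{Miyatani}, to the single statement that left multiplication by $x$ on $A_1(K)^{\dag}/A_1(K)^{\dag}H$ is bijective. That bijectivity is then proved entirely inside $A_1(K)^{\dag}$: one works modulo $xA_1(K)^{\dag}$, reduces to operators of the form $\sum_l c_l\partial^{[l]}$, and the congruence $\partial^{[l]}x\equiv\partial^{[l-1]}\pmod{xA_1(K)^{\dag}}$ turns both injectivity and surjectivity into an explicit first-order recursion on the $c_l$'s. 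Lemma~\ref{lem:evaluation} then controls the growth of $\prod_s(s-\alpha_i)$ and $\prod_s(s-\beta_j)$ exactly as you anticipate.

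Your direct ``overconvergent division in $B_1(K)^{\dag}$'' is a legitimate alternative, but your sketch of how the iteration runs is a bit off: using $H\equiv 0$ and the shift identity does not, for a single monomial $x^{-l}\partial^{[k]}$, produce an infinite series in non-negative powers of $x$; rather, each application of $H$ trades powers of $x$ against polynomial factors in $x\partial$, and one is really solving the same recursion the paper writes down, just viewed from the $B_1(K)^{\dag}$ side. The paper's packaging via the multiplication-by-$x$ criterion has the advantage of isolating this recursion in the simplest possible setting (coefficients of $\partial^{[l]}$ modulo $xA_1(K)^{\dag}$) and of avoiding any bookkeeping with negative powers of $x$; your formulation would work but requires more care to make the convergence of the division remainder in $A_1(K)^{\dag}$ precise.
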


\begin{proof}
    (i) follows from the exactness of $j^{\ast}$ on the category of coherent $A_1(K)^{\dag}$-modules.
    The proof of (ii) is, as in the proof of \cite[Proposition 3.1.4]{Miyatani}, reduced to the following Lemma.
\end{proof}

\begin{lemma}
    Let $\alpha_1,\dots,\alpha_m$ and $\beta_1,\dots,\beta_n$ be elements of $\bZ_p$.
    Assume that $\alpha_i$'s does not have an integer nor have a $p$-adic Liouville number.
    Then, on $A_1(K)^{\dag}/A_1(K)^{\dag}\Hyp_{\pi}(\balpha;\bbeta)$,
    the multiplication by $x$ from the left is bijective.
\end{lemma}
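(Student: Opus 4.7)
The plan is to reduce bijectivity of left multiplication by $x$ on $M = A_1(K)^{\dag}/A_1(K)^{\dag}\Hyp_\pi(\balpha;\bbeta)$ to a concrete bijectivity statement for a linear recursion on convergent sequences. Write $\theta=x\partial$, $c=(-1)^{m+np}\pi^{m-n}$, $P_\balpha(s)=\prod_i(s-\alpha_i)$, $P_\bbeta(s)=\prod_j(s-\beta_j)$, so that $\Hyp_\pi(\balpha;\bbeta)=P_\balpha(\theta)-cxP_\bbeta(\theta)$. I would first identify $A_1(K)^{\dag}/xA_1(K)^{\dag}$ with the space of $\partial$-series $\sum_{k\geq 0}a_k\partial^{[k]}$ satisfying $|a_k|<C\eta^k$ for some $C>0$ and $\eta<1$, since $x^l\partial^{[k]}\in xA_1(K)^{\dag}$ whenever $l\geq 1$. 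Using the commutation $\partial^{[k]}P(\theta)=P(\theta+k)\partial^{[k]}$, one computes that for $T\in A_1(K)^{\dag}$ with $x^0$-component $T_0=\sum_k t_k\partial^{[k]}$, the $x^0$-component of $T\Hyp_\pi(\balpha;\bbeta)$ equals
\[
\sum_{k\geq 0}\bigl(t_k P_\balpha(k)-c t_{k+1}P_\bbeta(k)\bigr)\partial^{[k]}.
\]
Applying the snake lemma to the short exact sequence $0\to A_1(K)^{\dag}\Hyp_\pi(\balpha;\bbeta)\to A_1(K)^{\dag}\to M\to 0$ under left multiplication by $x$, and using that $A_1(K)^{\dag}$ is an integral domain so right multiplication by $\Hyp_\pi$ is injective, bijectivity of $x$ on $M$ becomes equivalent to bijectivity of the $K$-linear map $\Phi\colon(t_k)\mapsto\bigl(t_k P_\balpha(k)-c t_{k+1}P_\bbeta(k)\bigr)$ on this space of convergent sequences.

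For surjectivity of $\Phi$, given a convergent target $(q_k)$ I would solve by backward recursion, setting
\[
t_k\defeq\sum_{j\geq 0}c^j q_{k+j}\,\frac{\prod_{i=0}^{j-1}P_\bbeta(k+i)}{\prod_{i=0}^{j}P_\balpha(k+i)}.
\]
The non-integrality of the $\alpha_i$ ensures $P_\balpha(k)\neq 0$ for every $k\geq 0$, so the summands are defined. Lemma \ref{lem:evaluation}(i) yields the upper bound $|\prod_{i=0}^{j-1}P_\bbeta(k+i)|\leq (jp)^n p^{-jn/(p-1)}$, while Lemma \ref{lem:evaluation}(ii), applied under the non-Liouville condition, provides for any $r<p^{-1/(p-1)}$ a lower bound $|\prod_{i=0}^{j}P_\balpha(k+i)|\geq C_r\,r^{jm}$ valid for $j$ large uniformly in $k$. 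Combined with $|c|^j=p^{-j(m-n)/(p-1)}$, the $j$-th summand is bounded above by a constant multiple of $\eta^k\cdot j^n\cdot\bigl(\eta/(rp^{1/(p-1)})^m\bigr)^j$. Choosing $r$ sufficiently close to $p^{-1/(p-1)}$ makes the geometric ratio strictly less than one, yielding convergence of the series and exponential decay of $(t_k)$ in $A_1(K)^{\dag}$.

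For injectivity of $\Phi$, suppose $(t_k)$ is a convergent sequence satisfying the homogeneous equation $t_k P_\balpha(k)=ct_{k+1}P_\bbeta(k)$. Between consecutive zeros of $P_\bbeta$ on $\mathbb{Z}_{\geq 0}$ (of which there are only finitely many) the recursion gives $|t_k/t_{k_0}|=\prod_{j=k_0}^{k-1}|P_\balpha(j)|/(|c||P_\bbeta(j)|)$; combining both parts of Lemma \ref{lem:evaluation} one obtains $\lim(\prod|P_\balpha(j)|)^{1/k}=p^{-m/(p-1)}$ while Lemma \ref{lem:evaluation}(i) alone gives $\limsup(\prod|P_\bbeta(j)|)^{1/k}\leq p^{-n/(p-1)}$. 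The precise cancellation $|c|\cdot p^{-n/(p-1)}=p^{-m/(p-1)}$ then forces $\liminf|t_k/t_{k_0}|^{1/k}\geq 1$, contradicting exponential decay of $(t_k)$ unless $t_{k_0}=0$; iterating this argument across the zeros of $P_\bbeta$ yields $t_k=0$ for all $k$. The main obstacle is precisely this delicate asymptotic bookkeeping — ensuring that the exponents coming from $|c|$, from the lower bound on $|\prod P_\balpha|$, and from the upper bound on $|\prod P_\bbeta|$ cancel exactly — and this is exactly what the $p$-adic non-Liouvilleness of the $\alpha_i$ supplies via Lemma \ref{lem:evaluation}.
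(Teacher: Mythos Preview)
Your reduction via the snake lemma to bijectivity of right multiplication by $\Hyp_\pi(\balpha;\bbeta)$ on $A_1(K)^\dag/xA_1(K)^\dag$ is exactly what the paper does, only packaged more cleanly; the injectivity and surjectivity arguments then proceed with the same estimates from Lemma~\ref{lem:evaluation}.

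There is one slip in your surjectivity estimate. The lower bound $\bigl|\prod_{i=0}^{j}P_\balpha(k+i)\bigr|\geq C_r\,r^{jm}$ is \emph{not} uniform in $k$: writing $\prod_{i=0}^{j}(k+i-\alpha_l)=\prod_{s=0}^{k+j}(s-\alpha_l)\big/\prod_{s=0}^{k-1}(s-\alpha_l)$ and applying Lemma~\ref{lem:evaluation}(ii) to the numerator and (i) to the denominator gives only
\[
\Bigl|\prod_{i=0}^{j}(k+i-\alpha_l)\Bigr|\;\geq\;\frac{D_r}{pk}\,r^{j}\,(rp^{1/(p-1)})^{k},
\]
which decays in $k$ since $rp^{1/(p-1)}<1$. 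This is not fatal: the resulting extra factor $k^m(rp^{1/(p-1)})^{-km}$ in the bound for the $j$-th summand of $t_k$ is absorbed by the $\eta^{k}$ coming from $|q_{k+j}|\leq C\eta^{k+j}$, provided $r$ is chosen with $(rp^{1/(p-1)})^{m}>\eta$; this same choice makes your geometric ratio $\eta(rp^{1/(p-1)})^{-m}$ less than $1$. The paper carries out exactly this splitting. Incidentally, your explicit formula for $t_k$, valid for all $k\geq 0$, sidesteps the paper's separate treatment of small indices via \cite[2.9.4]{Katz90ESDE}.
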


\begin{proof}
    \emph{Firstly, we prove the injectivity}.

    To prove this, it suffices to show that
    if $P, Q\in A_1(K)^{\dag}$ satisfy $xP=Q\Hyp_{\pi}(\balpha;\bbeta)$
    then $Q\in xA_1(K)^{\dag}$.
    In fact, then since $x$ is not a zero-divisor in $A_1(K)^{\dag}$,
    we get that $P\in A_1(K)^{\dag}\Hyp_{\pi}(\balpha;\bbeta)$ and the injectivity follows.

    In order to show that $Q\in xA_1(K)^{\dag}$, we may assume that $Q$ is of the form
    $Q=\sum_{l=0}^{\infty}c_l\partial^{[l]}$, where $c_l$'s are elements of $K$ satisfying
    $\exists C>0, \exists \eta<1, \forall l, |c_l|<C\eta^l$.
    Then, by using the congruence $\partial^{[l]}x\equiv\partial^{[l-1]}\pmod{xA_1(K)^{\dag}}$, we have
    \[
        Q\Hyp_{\pi}(\balpha;\bbeta)\equiv\sum_{l=0}^{\infty}c_l\prod_{i=1}^m(l-\alpha_i)\partial^{[l]}-(-1)^{m+np}\pi^{m-n}\sum_{l=1}^{\infty}c_l\prod_{j=1}^n(l-1-\beta_j)\partial^{[l-1]}
    \]
    modulo $xA_1(K)^{\dag}$. By assumption , the left-hand side belongs to $xA_1(K)^{\dag}$, which shows the recurrence relation
    \[
      c_l\prod_{i=1}^m(l-\alpha_i)=(-1)^{m+np}\pi^{m-n}c_{l+1}\prod_{j=1}^n(l-\beta_j).
    \]

    Now, fix a non-negative integer $l$ that exceeds all $\beta_j$'s which are integers.
    Then, by the recurrence relation, we have
    \begin{equation}
        c_{l+k}=(-1)^{k(m+np)}\pi^{-k(m-n)}\frac{\prod_{i=1}^m(l+k-1-\alpha_i)(l+k-2-\alpha_i)\dots(l-\alpha_i)}
        {\prod_{j=1}^n(l+k-1-\beta_j)(l+k-2-\beta_j)\dots(l-\beta_j)}c_l.
        \label{eq:recurrence}
    \end{equation}

    Let us choose $C>0$ and $\eta<1$ such that $\forall l, |c_l|<C\eta^l$.
    The series $\left\{\eta^{-k}|c_{l+k}|\right\}_{k=0}^{\infty}$ is then bounded.

    Now, put $r\defeq\eta^{1/2m}p^{-1/(p-1)}$; if $m=0$, we interpret $\eta^{1/2m}=1$.
    Lemma \ref{lem:evaluation} (i) shows that
    $\left|\dfrac{1}{(l+k-1-\beta_j)\dots(l-\beta_j)}\right|\geq p^{k/(p-1)-1}k^{-1}$ for each $j=1,\dots,n$.
    Moreover, Lemma \ref{lem:evaluation} (ii) shows that
    $\big|(l+k-1-\alpha_i)\dots(l-\alpha_i)\big|r^{-k}\to\infty$ as $k\to\infty$
    for each $i=1,\dots,m$.
    We therefore have, since $|\pi|=p^{-1/(p-1)}$,
    \begin{align*}
        \eta^{-k}|c_{l+k}| &= \eta^{-k}p^{k(m-n)/(p-1)}r^{km}\frac{\prod_{i=1}^m\left\{|(l+k-1-\alpha_i)\dots(l-\alpha_i)|r^{-k}\right\}}{\prod_{j=1}^n|(l+k-1-\beta_j)\dots(l-\beta_j)|}|c_l|\\
        &\geq p^{-n}\left(\eta^{-k/2}k^{-n}\right)\prod_{i=1}^m\left\{\big|(l+k-1-\alpha_i)\dots(l-\alpha_i)\big|r^{-k}\right\}|c_l|.
    \end{align*}
    If $|c_l|\neq 0$, then the right-hand side tends to $\infty$ as $k\to\infty$,
    which contradicts the fact that $\left\{\eta^{-k}|c_{l+k}|\right\}_{k=0}^{\infty}$ is bounded.
    Therefore we have $c_l=0$, and consequently $c_{l+k}=0$ for all $k\geq 0$.
    Now, by the recurrence relation (\ref{eq:recurrence}) and the assumption that $\alpha_i$'s are not integers,
    we get that $Q=0$.

    \emph{Nextly, we prove the surjectivity.}

       Given $P\in A_1(K)^{\dag}$, we have to show that there exists $Q, R\in A_1(K)^{\dag}$
    such that $xQ=P+R\Hyp_{\pi}(\balpha;\bbeta)$.
    To prove this, we may and do assume that $P$ is of the form $P=\sum_{l=0}^{\infty}c_l\partial^{[l]}$,
    where $c_l$'s are elements of $K$ satisfying $\exists C>0, \exists\eta<1, \forall l, |c_l|<C\eta^l$;
    under this assumption, we show that there exists $R\in A_1(K)^{\dag}$ of the form $R=\sum_{d=0}^{\infty}d_l\partial^{[l]}$
    that satisfies $P+R\Hyp_{\pi}(\balpha;\bbeta)\in xA_1(K)^{\dag}$.
    We define a number $l_0$ as follows:
    $l_0$ is the greatest number in $\Set{\beta_j+1 | j\in\{1,\ldots,n\}}\cap\bZ_{\geq 0}$ if
    this set is not empty; 
    we set $l_0=0$ if it is empty.
    
    To prove the existence of $R\in A_1(K)^{\dag}$ as above, we may assume that $c_l=0$ if $l<l_0$ by the following reason.
    If $A_1(K)$ denotes the usual Weyl algebra with coefficients in $K$,
    then since $\alpha_i$'s are not integers,
    the right multiplication by $\Hyp_{\pi}(\balpha;\bbeta)$ is bijective on $A_1(K)/xA_1(K)$ 
    \cite[2.9.4, (3)$\Rightarrow$(2)]{Katz90ESDE}.
    This shows that there exists $R'\in A_1(K)$ such that
    $\sum_{l=0}^{l_0-1}c_l\partial^{[l]}+R'\Hyp_{\pi}(\balpha;\bbeta)\in xA_1(K)$
    (The proof in the reference \cite{Katz90ESDE} is given over $\bC$, but it remains valid for all field of characteristic $0$).
    Now, we assume that $c_l=0$ if $l<l_0$.

    We put $d_l=0$ if $l<l_0$, and for each $s\geq 0$ we put
    \begin{equation}
        d_{l_0+s} = \sum_{t=s}^{\infty}(-1)^{(t-s)(m+np+1)}\pi^{(t-s)(m-n)}\frac{\prod_{j=1}^n(l_0+t-1-\beta_j)\ldots(l_0+s-\beta_j)}{\prod_{i=1}^m(l_0+t-\alpha_i)\ldots(l_0+s-\alpha_i)}c_{l_0+t};
        \label{eq:defofd}
    \end{equation}
    let us firstly check that this infinite series actually converges.
    Lemma \ref{lem:evaluation} (i) shows that $\big|(l_0+t-1-\beta_j)\dots(l_0+s-\beta_j)\big|\leq p^{-(t-s)/(p-1)+1}(t-s)$.
    Let $C>0$ and $\eta<1$ be numbers such that $\forall l, |c_l|<C\eta^l$, and put $r\defeq\eta^{1/2m}p^{-1/(p-1)}$
    (as before, if $m=0$, then we interpret $\eta^{1/2m}=1$).
     Then, Lemma \ref{lem:evaluation} (ii) shows that
    $\dfrac{1}{\big|(l_0+t-\alpha_i)\dots(l_0+s-\alpha_i)\big|}r^{t-s}\to 0$ as $t\to\infty$.
    Therefore, the norm of each summand in the right-hand side of (\ref{eq:defofd}) is bounded from above by
    \begin{align*}
        & p^{-(t-s)m/(p-1)+n}(t-s)^nr^{-(t-s)m}\prod_{i=1}^m\left\{\frac{1}{\big|(l_0+t-\alpha_i)\dots(l_0+s-\alpha_i)\big|}r^{t-s}\right\}C\eta^{l_0+t}\\
        \leq & Cp^n\left\{(t-s)^n\eta^{l_0+(s+t)/2}\right\}\prod_{i=1}^m\left\{\frac{1}{\big|(l_0+t-\alpha_i)\dots(l_0+s-\alpha_i)\big|}r^{t-s}\right\},
    \end{align*}
    and the right-hand side converges to $0$ as $t\to\infty$.
    We have now checked that the right-hand side of (\ref{eq:defofd}) converges and that thus $d_{l_0+s}$ is well-defined.

    Nextly, we put $R\defeq\sum_{l=0}^{\infty}d_l\partial^{[l]}$ and prove that $R\in A_1(K)^{\dag}$.
    By the bound of the each summand of (\ref{eq:defofd}) given above, we have
    \begin{equation}
        |d_{l_0+s}|< Cp^n\max_{t\geq s}\left[\left\{(t-s)^n\eta^{l_0+(s+t)/2}\right\}\prod_{i=1}^m\left\{\frac{1}{\big|(l_0+t-\alpha_i)\dots(l_0+s-\alpha_i)\big|}r^{t-s}\right\}\right].
        \label{eq:evalofd}
    \end{equation}
    If $m=0$, then it is easy to check that there exists a constant $C'>0$
    such that $|d_{l_0+s}|\leq C'\eta^{s/2}$. We thus assume that $m>0$.

    For each $i=1,\dots,m$, Lemma \ref{lem:evaluation} (i) shows the inequality
    \begin{align*}
        \frac{1}{\big|(l_0+t-\alpha_i)\dots(l_0+s-\alpha_i)\big|}r^{t-s}
        &= \frac{\big|(l_0+s-1-\alpha_i)\dots(l_0-\alpha_i)\big|}{\big|(l_0+t-\alpha_i)\dots(l_0-\alpha_i)\big|}r^{t-s}\\
        &\leq \frac{1}{\big|(l_0+t-\alpha_i)\dots(l_0-\alpha_i)\big|}r^{t-s}p^{-s/(p-1)+1}s\\
        &= \frac{r^t}{\big|(l_0+t-\alpha_i)\dots(l_0-\alpha_i)\big|}ps\eta^{-s/2m}.
    \end{align*}
    By Lemma \ref{lem:evaluation} (ii) , the fraction $\dfrac{r^t}{\big|(l_0+t-\alpha_i)\dots(l_0-\alpha_i)\big|}$
    is bounded by a constant independent of $t$.
     Therefore, by looking at (\ref{eq:evalofd}), there exists a constant $C_1>0$ such that
    \begin{align*}
        |d_{l_0+s}|&<C_1\max_{t\geq s}\left\{(t-s)^n\eta^{(s+t)/2}\right\}s^n\eta^{-s/2}\\
        &= C_1\max_{t\geq s}\left\{(t-s)^n\eta^{(t-s)/2}\right\}s^n\eta^{s/2}\\
        &= C_1\max_{t\geq 0}\left\{t^n\eta^{t/2}\right\}\left(s^n\eta^{s/4}\right)\eta^{s/4}.
    \end{align*}
    Now, $C_1\max_{t\geq 0}\left\{t^n\eta^{t/2}\right\}\left(s^n\eta^{s/4}\right)$ is bounded by a constant $C_2$ independent of $s$
    and we have $|d_{l_0+s}|\leq C_2\eta^{s/4}$ for all $s\geq 0$.
    This proves that $R=\sum_{l=0}^{\infty}d_l\partial^{[l]}$ belongs to $A_1(K)^{\dag}$.

    It remains to prove that $R$ satisfies $P+R\Hyp_{\pi}(\balpha;\bbeta)\in xA_1(K)^{\dag}$,
    and this is just a formal calculation.
    In fact, it is equivalent to showing that
    \[
        d_l\prod_{i=1}^m(l-\alpha_i)-(-1)^{(m+np)}\pi^{m-n}d_{l+1}\prod_{j=1}^n(l-\beta_j)+c_l=0
    \]
    for all $l\geq 0$.
    It trivially holds if $l<l_0-1$ because $d_l=d_{l+1}=c_l=0$ in this case;
    it also holds if $l=l_0-1$ because $d_l=c_l=0$ and $l-\beta_j=0$ for some $j$;
    otherwise, we may check it directly by using (\ref{eq:defofd}).
\end{proof}

\section{Hypergeometric Arithmetic $\sD$-modules and Multiplicative Convolution.}

\subsection{Main Theorem.}
Now, we are ready to state and prove the main theorem of this article.

\begin{theorem}
  \label{thm:maintheorem}
  Let $\balpha=(\alpha_1,\dots,\alpha_m)$ and $\bbeta=(\beta_1,\dots,\beta_n)$ be
  sequences of elements of $\mathbb{Z}_p$.
  Assume that, for any $i$ and $j$, $\alpha_i-\beta_j$ is not an integer
  nor a $p$-adic Liouville number.
  
  \textup{(i)} Assume that $m\geq 1$ and put $\balpha'=(\alpha_2,\dots,\alpha_m)$.
  Then, we have an isomorphism
    \[
      \sH_{\pi}(\balpha'; \bbeta)\ast\sH_{\pi}(\alpha_1; \emptyset)[-1]\cong\sH_{\pi}(\balpha;\bbeta).
    \]

  \textup{(ii)} Assume that $n\geq 1$ and put $\bbeta'=(\beta_1,\dots,\beta_n)$.
  Then, we have an isomorphism
    \[
      \sH_{\pi}(\balpha; \bbeta')\ast\sH_{\pi}(\emptyset; \beta_1)[-1]\cong\sH_{\pi}(\balpha;\bbeta).
    \]
\end{theorem}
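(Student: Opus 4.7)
The plan is to establish (i) first and then deduce (ii) by applying $\inv^{\ast}$ to both sides of (i). Since $\inv$ is an involution on $\mathbb{G}_{\rmm,k}$ satisfying $\inv\circ\overline{\lambda}=\overline{\lambda}\circ(\inv\times\inv)$, the multiplicative convolution enjoys a compatibility $\inv^{\ast}(\mathscr{E}\ast\mathscr{F})\cong\inv^{\ast}\mathscr{E}\ast\inv^{\ast}\mathscr{F}$. Combined with Lemma \ref{lem:calcofhyp}(i), applying $\inv^{\ast}$ to (i) for the parameter sequence $(-\bbeta;-\balpha)$ with $\pi$ replaced by $(-1)^p\pi$ yields (ii); the symmetry of the non-Liouville hypothesis in the pair $(i,j)$ ensures the translated hypothesis still holds.

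For (i), I would first reduce to the case $\alpha_1=0$ by a Kummer twist. By Lemma \ref{lem:calcofhyp}(ii), each of the three hypergeometrics appearing in (i) is a tensor twist by $\sK_{\alpha_1}$ of its $(-\alpha_1)$-shifted counterpart. Moreover, since $\overline{\lambda}$ is the multiplication map and Kummer characters are multiplicative, one has $\overline{\lambda}^{\ast}\sK_{\gamma}\cong\pr_1^{\ast}\sK_{\gamma}\otimes\pr_2^{\ast}\sK_{\gamma}$, and the projection formula then yields $(\mathscr{E}\otimes\sK_{\gamma})\ast(\mathscr{F}\otimes\sK_{\gamma})\cong(\mathscr{E}\ast\mathscr{F})\otimes\sK_{\gamma}$. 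Writing $\tilde{\balpha}'\defeq\balpha'-\alpha_1$ and $\tilde{\bbeta}\defeq\bbeta-\alpha_1$, (i) therefore reduces to showing
\[
\sH_{\pi}(\tilde{\balpha}';\tilde{\bbeta})\ast\sH_{\pi}(0;\emptyset)[-1]\cong\sH_{\pi}(0,\tilde{\balpha}';\tilde{\bbeta}).
\]

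Since $\sH_{\pi}(0;\emptyset)\cong j^{\ast}\sL_{\pi}$ by Remark \ref{rem:m+n=1}, Proposition \ref{prop:fourierandconv} rewrites the left-hand side as $j^{\ast}\FT_{\pi}\bigl(j_{+}\inv^{\ast}\sH_{\pi}(\tilde{\balpha}';\tilde{\bbeta})\bigr)$. Lemma \ref{lem:calcofhyp}(i) identifies $\inv^{\ast}\sH_{\pi}(\tilde{\balpha}';\tilde{\bbeta})\cong\sH_{(-1)^p\pi}(-\tilde{\bbeta};-\tilde{\balpha}')$, whose upper parameters $-\tilde{\beta}_j=\alpha_1-\beta_j$ are neither integers nor $p$-adic Liouville numbers by hypothesis. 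Thus Lemma \ref{lem:push} applies and yields
\[
j_{+}\inv^{\ast}\sH_{\pi}(\tilde{\balpha}';\tilde{\bbeta})\cong A_1(K)^{\dag}/A_1(K)^{\dag}\Hyp_{(-1)^p\pi}(-\tilde{\bbeta};-\tilde{\balpha}').
\]
Proposition \ref{prop:arithfourier} then presents its geometric Fourier transform as the cyclic $A_1(K)^{\dag}$-module with defining relation $\varphi_{\pi}\bigl(\Hyp_{(-1)^p\pi}(-\tilde{\bbeta};-\tilde{\balpha}')\bigr)$, placed in degree one.

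The remaining step---the main computational obstacle---is the explicit verification that, after the substitutions $\varphi_{\pi}(x\partial)=-(x\partial+1)$ and $\varphi_{\pi}(x)=-\partial/\pi$, together with commutations $\partial(x\partial-a)=(x\partial-(a-1))\partial$, the operator $\varphi_{\pi}\bigl(\Hyp_{(-1)^p\pi}(-\tilde{\bbeta};-\tilde{\balpha}')\bigr)$ generates, after passage to $B_1(K)^{\dag}$ via $j^{\ast}$ (where $x^{-1}$ becomes available so that factors such as $\partial=x^{-1}\cdot x\partial$ can be absorbed), the same left ideal as $\Hyp_{\pi}(0,\tilde{\balpha}';\tilde{\bbeta})$. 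The bookkeeping of the signs $(-1)^{m+np}$, of the powers of $\pi$ coming from $\pi\leftrightarrow(-1)^p\pi$, and of the cohomological shift $[-1]$ implicit in Proposition \ref{prop:arithfourier}, is delicate; the calculation essentially parallels that of \cite[Theorem 3.2.5]{Miyatani}, with Lemma \ref{lem:push} playing the role previously played by the $\mathscr{D}^{\dag}$-affinity argument available in the rational case. Once the ideal identification is secured, Lemma \ref{lem:push}(i) translates the resulting cyclic $B_1(K)^{\dag}$-module back into $\sH_{\pi}(0,\tilde{\balpha}';\tilde{\bbeta})$, completing the proof.
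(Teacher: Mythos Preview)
Your overall strategy coincides with the paper's: deduce (ii) from (i) via $\inv^{\ast}$, reduce (i) to $\alpha_1=0$ by a Kummer twist, rewrite the convolution with $\sH_{\pi}(0;\emptyset)=j^{\ast}\sL_{\pi}$ as a Fourier transform via Proposition~\ref{prop:fourierandconv}, and then identify the result using Lemma~\ref{lem:push} and the algebraic description of $\FT_{\pi}$.

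There is, however, a concrete miscalculation in the final step. Under either $\varphi_{\pi}$ or $\varphi_{\pi}^{-1}$ one has $x\partial\mapsto -(x\partial+1)$, so each factor $(x\partial+\tilde{\beta}_j)$ of $\Hyp_{(-1)^p\pi}(-\tilde{\bbeta};-\tilde{\balpha}')$ is sent to $-(x\partial-(\tilde{\beta}_j-1))$, not to $-(x\partial-\tilde{\beta}_j)$; likewise for the $\alpha$-factors. After multiplying on the left by $x$ in $B_1(K)^{\dag}$ to turn the stray $\partial$ into $x\partial$, the ideal you obtain is that of $\Hyp_{\pi}(0,\tilde{\balpha}'-1;\tilde{\bbeta}-1)$, not of $\Hyp_{\pi}(0,\tilde{\balpha}';\tilde{\bbeta})$. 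A global Kummer twist cannot repair this: tensoring with $\sK_1\cong\sO$ shifts \emph{all} upper and lower parameters by $1$, hence carries $\sH_{\pi}(0,\tilde{\balpha}'-1;\tilde{\bbeta}-1)$ to $\sH_{\pi}(1,\tilde{\balpha}';\tilde{\bbeta})$, which is not the target. (Also, strictly speaking the defining relation of $\varphi_{\pi,\ast}(A_1(K)^{\dag}/A_1(K)^{\dag}P)$ is $\varphi_{\pi}^{-1}(P)$, not $\varphi_{\pi}(P)$, though this only affects signs.)

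The paper fixes exactly this off-by-one by inserting a preliminary shift: before inverting, it replaces $\sH_{\pi}(\balpha';\bbeta)$ by the isomorphic module $\sH_{\pi}(\balpha'+1;\bbeta+1)$ (via Lemma~\ref{lem:calcofhyp}(ii) and $\sK_1\cong\sO$), so that the input to the Fourier transform becomes $\Hyp_{(-1)^p\pi}(-\bbeta-1;-\balpha'-1)$. With this pre-shift the $+1$ introduced by $\varphi_{\pi}$ on $x\partial$ cancels exactly, and the resulting ideal in $B_1(K)^{\dag}$ is that of $\Hyp_{\pi}(0,\balpha';\bbeta)$ as required. Inserting this one line into your argument makes it correct and identical to the paper's.
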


\begin{proof}
    We prove (i) and (ii) by induction on $m+n$.
    If $(m,n)=(1,0)$ \resp{$(m,n)=(0,1)$}, then (i) \resp{(ii)} follows from the fact
    that $\sH_{\pi}(\emptyset;\emptyset)$ is a unit object for the multiplicative convolution.
    The latter fact can be checked as in the proof of \cite[2.1.2]{Miyatani}.

    Now, assume that $m+n\geq 2$ and let us prove the assertions (i) and (ii).
    In fact, Lemma \ref{lem:calcofhyp} (i)
    and the isomorphism $\inv^{\ast}(\sM\ast\sN)\cong(\inv^{\ast}\sM)\ast(\inv^{\ast}\sN)$,
    whose proof is straightforward and left to the reader,
    show that (ii) is deduced from (i).

    The proof of (i) is reduced to the case where $\alpha_1=0$ as follows.
    Because of the isomorphism $\lambda'^!\sK_{\alpha_1}\cong f'^!\left(\pr_1^!\sK_{\alpha_1}\totimes\pr_2^!\sK_{\alpha_1}\right)[1]$,
    we have
    \begin{align*}
        \big(\sM\ast\sH_{\pi}(0,\emptyset)\big)\totimes\sK_{\alpha_1}
        & =
        \lambda'_{+}f'^!\big(\pr_1^!\sM\totimes\pr_2^!\sH_{\pi}(0,\emptyset)\big)\totimes\sK_{\alpha_1} \\
        & \cong
        \lambda'_{+}\left(f'^!\big(\pr_1^!\sM\totimes\pr_2^!\sH_{\pi}(0,\emptyset)\big)\totimes\lambda'^!\sK_{\alpha_1}\right) \\
        & \cong
        \lambda'_{+}f'^!\left(\pr_1^!\sM\totimes\pr_2^!\sH_{\pi}(0,\emptyset)\totimes\pr_1^!\sK_{\alpha_1}\totimes\pr_2^!\sK_{\alpha_1}\right)[1]\\
        & \cong
        \lambda'_{+}f'^!\left((\sM\totimes\sK_{\alpha_1})\Ldagboxtimes\big(\sH_{\pi}(0;\emptyset)\totimes\sK_{\alpha_1}\big)\right)[1]\\
        & \cong
        \lambda'_{+}f'^!\left((\sM\totimes\sK_{\alpha_1})\Ldagboxtimes\sH_{\pi}(\alpha_1;\emptyset)\right)\\
        & \cong
        (\sM\totimes\sK_{\alpha_1})\ast\sH_\pi(\alpha_1;\emptyset).
    \end{align*}
    Therefore, if the assertion (i) is proved for $\alpha_1=0$,
    then we get the desired theorem for general $\alpha_1$ by tensoring $\sK_{\alpha_1}$,
    with the aid of Lemma \ref{lem:calcofhyp} (ii).

    In the case where $\alpha_1=0$,
    we may prove the assertion in the same way as \cite[Theorem 3.2.5]{Miyatani}.
    We include here a sketch of the proof.

    By the induction hypothesis, $\sH_{\pi}(\balpha';\bbeta)\cong\sH_{\pi}(\balpha'+1;\bbeta+1)$
    because for the Kummer isocrystals $\sK_{\gamma}$ we have an isomorphism $\sK_{\gamma}\cong\sK_{\gamma+1}$.
    Therefore, since $(-\beta_j-1)$'s do not have a $p$-adic Liouville number,
    we see by Lemma \ref{lem:calcofhyp} (i) and Lemma \ref{lem:push} that
    \begin{align*}
        j_{+}\inv^{\ast}\sH_{\pi}(\balpha';\bbeta) &\cong j_+\inv^{\ast}\sH_{\pi}(\balpha'+1;\bbeta+1) \\
        & \cong j_+\sH_{(-1)^p\pi}(-\bbeta-1;-\balpha'-1)\\
        & \cong A_1(K)^{\dag}/A_1(K)^{\dag}\Hyp_{(-1)^p\pi}(-\bbeta-1;-\balpha'-1).
    \end{align*}
    Because this is a coherent $A_1(K)^{\dag}$-module,
    Proposition \ref{prop:fourierandconv} shows that
    \[
        \sH_{\pi}(\balpha';\bbeta)\ast\left(j^{\ast}\sL_{\pi}[-1]\right)
        \cong j^{\ast}\big(\FT_{\pi}\big(j_{+}\inv^{\ast}\sH_{\pi}(\balpha';\bbeta)\big)\big).     
    \]
    Finally, by a direct calculation using Proposition \ref{prop:arithfourier},
    we may prove the isomorphism
    \[
      \FT_{\pi}\left(A_1(K)^{\dag}/A_1(K)^{\dag}\Hyp_{(-1)^p\pi}(-\bbeta-1;-\balpha'-1)\right)
      \cong A_1(K)^{\dag}/A_1(K)^{\dag}\Hyp_{\pi}(\balpha;\bbeta)
    \]
    (cf. the proof of \cite[3.2.5]{Miyatani}). Now the assertion follows by Lemma \ref{lem:push} (i).
\end{proof}

\subsection{Quasi-$\Sigma$-unipotence.}

In this last subsection, 
we discuss the quasi-$\Sigma$-unipotence of arithmetic hypergeometric $\mathscr{D}$-modules.

\begin{para}
    Let $\Sigma$ be the subgroup of $\mathbb{Z}_p/\mathbb{Z}$
    that does not contain a $p$-adic Liouville number.
    Caro \cite[3.3.5]{Caro18MM} defines, for each smooth formal scheme $\sP$ over $V$,
    the subcategory $D^{\rb}_{\rqq\hyphen\Sigma}\big(\sD^{\dag}_{\sP,\bQ}\big)$
    of $D^{\rb}_{\coh}\big(\sD^{\dag}_{\sP,\bQ}\big)$
    consisting of ``quasi-$\Sigma$-unipotent'' objects.
    These categories are stable under Grothendieck's six operations.
\end{para}

\begin{proposition}
    \label{prop:overholonomic}
    Let $\balpha=(\alpha_1,\dots,\alpha_m)$ and $\bbeta=(\beta_1,\dots,\beta_n)$ be sequences of elements of $\mathbb{Z}_p$,
    and assume that $(m,n)\neq (0,0)$.
    Let $\Sigma$ be the subgroup of $\mathbb{Z}_p/\mathbb{Z}$
generated by the canonical images of $\alpha_i$'s and $\beta_j$'s.
Assume that $\alpha_i-\beta_j\not\in\mathbb{Z}$ for any $i, j$, 
and that $\Sigma$ does not contain the canonical image of a $p$-adic Liouville number.

Then, $\sH_{\pi}(\balpha;\bbeta)$ is an object of $D^{\rb}_{\rqq\hyphen\Sigma}\big(\sD^{\dag}_{\hbPV,\bQ}\big)$.
In particular, it is an overholonomic $\sD^{\dag}_{\hbPV,\bQ}$-module.
\end{proposition}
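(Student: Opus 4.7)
\emph{Proof plan.} The plan is to combine the main theorem with Caro's stability of the quasi-$\Sigma$-unipotent categories under Grothendieck's six operations, thereby reducing the statement to a check on rank-one hypergeometric building blocks. First, I would verify that Theorem \ref{thm:maintheorem} applies iteratively: since $\alpha_i - \beta_j \notin \mathbb{Z}$ by assumption and since the class of $\alpha_i - \beta_j$ in $\mathbb{Z}_p/\mathbb{Z}$ lies in $\Sigma$, which by hypothesis contains no $p$-adic Liouville class, $\alpha_i - \beta_j$ is neither an integer nor a Liouville number; the same holds for every intermediate parameter set produced in the induction. Applying Theorem \ref{thm:maintheorem} $m+n-1$ times, peeling off one parameter at each step, yields an isomorphism
\begin{equation*}
    \sH_{\pi}(\balpha;\bbeta) \cong \sH_{\pi}(\alpha_1;\emptyset)\ast\cdots\ast\sH_{\pi}(\alpha_m;\emptyset)\ast\sH_{\pi}(\emptyset;\beta_1)\ast\cdots\ast\sH_{\pi}(\emptyset;\beta_n)[s]
\end{equation*}
for a suitable cohomological shift $s$; the hypothesis $(m,n)\neq(0,0)$ guarantees that the right-hand side is a non-empty convolution.

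Next, I would check that each rank-one factor belongs to $D^{\rb}_{\rqq\hyphen\Sigma}\big(\sD^{\dag}_{\hbPV,\bQ}\big)$. By Remark \ref{rem:m+n=1}, $\sH_{\pi}(\alpha_i;\emptyset)\cong j^{\ast}\sL_{\pi}\otimes^{\dag}\sK_{\alpha_i}$ and $\sH_{\pi}(\emptyset;\beta_j)\cong\inv^{\ast}\big(j^{\ast}\sL_{(-1)^p\pi}\otimes^{\dag}\sK_{-\beta_j}\big)$. The Dwork isocrystal $\sL_{\pi}$ is a rank-one overconvergent $F$-isocrystal on $\hbAV$ with unipotent formal monodromy at $\infty$, hence lies in Caro's quasi-$\{0\}$-unipotent category, and a fortiori in the quasi-$\Sigma$-unipotent one. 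The Kummer isocrystal $\sK_{\alpha_i}$ has formal exponent $\alpha_i$ at $0$ and $-\alpha_i$ at $\infty$, whose classes modulo $\mathbb{Z}$ both belong to $\Sigma$ by the very definition of $\Sigma$; the non-Liouvilleness of $\Sigma$ is what legitimates including $\sK_{\alpha_i}$ in the quasi-$\Sigma$-unipotent category, and the same reasoning covers $\sK_{-\beta_j}$. The tensor product and the $\inv^{\ast}$-pull-back appearing in the descriptions preserve membership by stability under the six operations.

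Then, the multiplicative convolution $\sE\ast\sF=\lambda'_{+}f'^{!}(\sE\Ldagboxtimes\sF)$ is assembled solely from Grothendieck's six operations, under all of which the quasi-$\Sigma$-unipotent categories are stable (as recalled in the paragraph preceding the statement). Combining this stability with the decomposition above places $\sH_{\pi}(\balpha;\bbeta)$ in $D^{\rb}_{\rqq\hyphen\Sigma}\big(\sD^{\dag}_{\hbPV,\bQ}\big)$; the overholonomicity assertion follows immediately, since quasi-$\Sigma$-unipotent objects are overholonomic in Caro's framework.

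The main obstacle I anticipate is the careful identification of each rank-one building block as a bona fide object of $D^{\rb}_{\rqq\hyphen\Sigma}\big(\sD^{\dag}_{\hbPV,\bQ}\big)$ rather than merely of $D^{\rb}_{\coh}\big(\sD^{\dag}_{\hbPV,\bQ}(\pdag{\{0,\infty\}})\big)$: one has to verify that the canonical extension of such a factor to $\hbPV$ is quasi-$\Sigma$-unipotent at both $0$ and $\infty$, which is precisely where the non-Liouvilleness hypothesis on $\Sigma$ enters Caro's framework, ensuring the finiteness of the relevant $p$-adic indices and the existence of $\Sigma$-unipotent formal extensions of the Kummer isocrystals across the missing points. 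Once this is in place, the assembly of the full statement is a mechanical combination of the main theorem with the six-functor stability.
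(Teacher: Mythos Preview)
Your proposal follows essentially the same route as the paper: decompose $\sH_\pi(\balpha;\bbeta)$ via Theorem~\ref{thm:maintheorem} into rank-one convolution factors, verify each factor lies in $D^{\rb}_{\rqq\hyphen\Sigma}$, and conclude by stability of this category under the six operations. Your treatment of the Kummer factors $\sK_{\alpha_i}$, $\sK_{-\beta_j}$ via their exponents at $0$ and $\infty$ also matches the paper's.

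The one substantive discrepancy is your justification for $\sL_\pi$. The Dwork isocrystal does \emph{not} have unipotent formal monodromy at $\infty$: as a connection it is $d-\pi\,dx$, hence $d+\pi\,t^{-2}\,dt$ in the local coordinate $t=1/x$, an irregular singularity of irregularity $1$, so it is not log-extendable there and your ``unipotent formal monodromy'' claim fails. The paper instead observes that $\sL_\pi$ is a direct summand of the push-forward of the trivial isocrystal on $\bA^1_k$ along the Artin--Schreier morphism; since the trivial isocrystal has exponent $0\in\Sigma$ at $\infty$ and $D^{\rb}_{\rqq\hyphen\Sigma}$ is stable under push-forward and direct summands, this places $\sL_\pi$ in the category without ever inspecting its local behaviour at $\infty$. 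With that correction your argument is complete and coincides with the paper's.
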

\begin{proof}
    If the canonical image of a $p$-adic number $\gamma\in\mathbb{Z}_p$ in $\mathbb{Z}_p/\mathbb{Z}$
    belongs to $\Sigma$,
    then $\sK_{\gamma}$ is an object of $D^{\rb}_{\rqq\hyphen\Sigma}\big(\sD^{\dag}_{\hbPV,\bQ}\big)$
    because it is (the realization on $(\mathbb{G}_{\rmm,k}, \hbPV)$ of)
    an overconvergent isocrystal on $\mathbb{G}_{\rmm,k}$ whose exponent is $\gamma\in\Sigma$ (resp. $-\gamma\in\Sigma$) at $0$ (resp. at $\infty$),
    and because $D^{\rb}_{\rqq\hyphen\Sigma}\big(\sD^{\dag}_{\hbPV,\bQ}\big)$ contains all such objects by construction.

    We may also show that $\sL_{\pi}$ is also an object of $D^{\rb}_{\rqq\hyphen\Sigma}\big(\sD^{\dag}_{\hbPV,\bQ}\big)$.
    In fact, it is a direct factor of the push-forward of the trivial isocrystal on $\mathbb{A}^1_{k}$
    along the Artin--Schreier morphism.
    Now, the trivial isocrystal on $\mathbb{A}^1_{k}$ is an object
    of $D^{\rb}_{\rqq\hyphen\Sigma}\big(\sD^{\dag}_{\hbPV,\bQ}\big)$ (the exponent at $\infty$ is $0\in\Sigma$).
    Since $D^{\rb}_{\rqq\hyphen\Sigma}$ is stable under push-forward and direct factor,
    the claim follows.

    Now, by Remark \ref{rem:m+n=1}, the corollary holds for $(m,n)=(1,0), (0,1)$.
    For general $(m,n)$, Theorem \ref{thm:maintheorem} and the stability
    of $D^{\rb}_{\rqq\hyphen\Sigma}$ under Grothendieck's six functors show the assertion.
\end{proof}

\bibliographystyle{amsalpha}
\bibliography{references}

\providecommand{\bysame}{\leavevmode\hbox to3em{\hrulefill}\thinspace}
\providecommand{\MR}{\relax\ifhmode\unskip\space\fi MR }
% \MRhref is called by the amsart/book/proc definition of \MR.
\providecommand{\MRhref}[2]{%
  \href{http://www.ams.org/mathscinet-getitem?mr=#1}{#2}
}
\providecommand{\href}[2]{#2}
\begin{thebibliography}{Huy98}

\bibitem[Abe14]{Abe14}
T.~Abe, \emph{Explicit calculation of {F}robenius isomorphisms and
  {P}oincar\'{e} duality in the theory of arithmetic {$\mathscr{D}$}-modules},
  Rend. Semin. Mat. Univ. Padova \textbf{131} (2014), 89--149.

\bibitem[Ber96]{Berthelot96}
P.~Berthelot, \emph{{${\mathscr D}$}-modules arithm\'{e}tiques. {I}.
  {O}p\'{e}rateurs diff\'{e}rentiels de niveau fini}, Ann. Sci. \'{E}cole Norm.
  Sup. (4) \textbf{29} (1996), no.~2, 185--272.

\bibitem[Car06]{Caro06CM}
D.~Caro, \emph{Fonctions {$L$} associ\'{e}es aux {$\mathscr D$}-modules
  arithm\'{e}tiques. {C}as des courbes}, Compos. Math. \textbf{142} (2006),
  no.~1, 169--206.

\bibitem[Car15]{Caro2015}
\bysame, \emph{Sur la stabilit\'{e} par produit tensoriel de complexes de
  {$\mathscr{D}$}-modules arithm\'{e}tiques}, Manuscripta Math. \textbf{147}
  (2015), no.~1-2, 1--41.

\bibitem[Car18]{Caro18MM}
\bysame, \emph{Unipotent monodromy and arithmetic {$\mathscr {D}$}-modules},
  Manuscripta Math. \textbf{156} (2018), no.~1-2, 81--115.

\bibitem[Huy98]{Huyghe98}
C.~Huyghe, \emph{{$\mathscr D^\dagger(\infty)$}-affinit\'{e} des sch\'{e}mas
  projectifs}, Ann. Inst. Fourier (Grenoble) \textbf{48} (1998), no.~4,
  913--956.

\bibitem[Kat90]{Katz90ESDE}
N.~M. Katz, \emph{Exponential sums and differential equations}, Annals of
  Mathematics Studies, vol. 124, Princeton University Press, Princeton, NJ,
  1990.

\bibitem[Ked10]{Kedlaya2010}
K.~S. Kedlaya, \emph{{$p$}-adic differential equations}, Cambridge Studies in
  Advanced Mathematics, vol. 125, Cambridge University Press, Cambridge, 2010.

\bibitem[Miy16]{Miyatani}
K.~Miyatani, \emph{{$p$}-adic generalized hypergeometric equations from the
  viewpoint of arithmetic {$\mathscr{D}$}-modules}, 2016, to appear in Amer.\
  J.\ of Math.

\bibitem[NH04]{Huyghe04}
C.~Noot-Huyghe, \emph{Transformation de {F}ourier des {$\mathscr D$}-modules
  arithm\'{e}tiques. {I}}, Geometric aspects of {D}work theory. {V}ol. {I},
  {II}, Walter de Gruyter, Berlin, 2004, pp.~857--907.

\end{thebibliography}

\end{document}